\title{Exact Lagrangian Fillings of Legendrian $(2,n)$ torus links}
\author{Yu Pan}
\numberwithin{equation}{section}
\numberwithin{figure}{section}
\theoremstyle{plain}
\newtheorem{thm}{Theorem}[section]
\newtheorem{lem}[thm]{Lemma}
\newtheorem{cor}[thm]{Corollary}
\newtheorem{prop}[thm]{Proposition}
\theoremstyle{definition}
\newtheorem{defn}[thm]{Definition}
\newtheorem{eg}[thm]{Example}
\theoremstyle{remark}
\newtheorem{rmk}[thm]{Remark}
\newcommand{\bbR}{{\mathbb{R}}}
\newcommand{\bbZ}{{\mathbb{Z}}}
\newcommand{\bbF}{{\mathbb{F}}}
\newcommand{\calA}{{\mathcal{A}}}
\newcommand{\calM}{{\mathcal{M}}}
\newcommand{\diag}[1]{\begin{center}\begin{minipage}{5in}\xymatrix{#1}\end{minipage}\end{center}}
\def\arl{\ar[l]}
\def\arr{\ar[r]}
\def\ard{\ar[d]}
\tikzset{node distance=1.5cm, auto}
\begin{document}

\maketitle

\abstract 
For a Legendrian $(2,n)$ torus knot or link with maximal Thurston-Bennequin number,
Ekholm, Honda, and K{\'a}lm{\'a}n \cite{EHK} constructed $C_n$ exact Lagrangian fillings, where $C_n$ is the $n$-th Catalan number.
We show that these exact Lagrangian fillings are pairwise non-isotopic through exact Lagrangian isotopy.
To do that, we compute the augmentations induced by the exact Lagrangian fillings $L$ to $\bbZ_2[H_1(L)]$ and distinguish the resulting augmentations.
\endabstract

\section{Introduction}

A  Legendrian submanifold $\Lambda$ in the standard contact manifold $\big(\bbR^3, \xi= \ker \alpha \big)$, where $\alpha = dz-y dx$, is a $1$-dimensional closed manifold such that  $T\Lambda \subset \xi$ everywhere. 
An  exact Lagrangian filling $L$ of $\Lambda$ in the symplectization manifold $\big(\bbR_t \times \bbR^3, \omega= d(e^t \alpha) \big)$ is a $2$-dimensional surface that is cylindrical over $\Lambda$ when $t$ is sufficiently large. 
See Section \ref{DGAmap} for the detailed definition and Figure \ref{filling} for a picture.
\begin{figure}[iht]
\begin{minipage}{2.5in}
\begin{center}
\labellist
\pinlabel $\Lambda$ at -10 120
\pinlabel $L$ at -5 55 
\endlabellist
\includegraphics[width=1in]{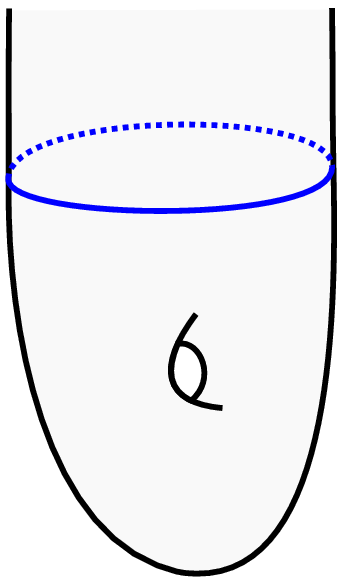}
\caption{A picture of an exact Lagrangian filling.}
\label{filling}
\end{center}
\end{minipage}
\begin{minipage}{3.7in}
\begin{center}
\labellist
\pinlabel  $b_1$ at 18 48
\pinlabel  $b_2$ at 45 48
\pinlabel $b_n$ at  250 48
\pinlabel $a_1$ at 275 115
\pinlabel $a_2$ at 275 20
\endlabellist
\includegraphics[width=2.8in]{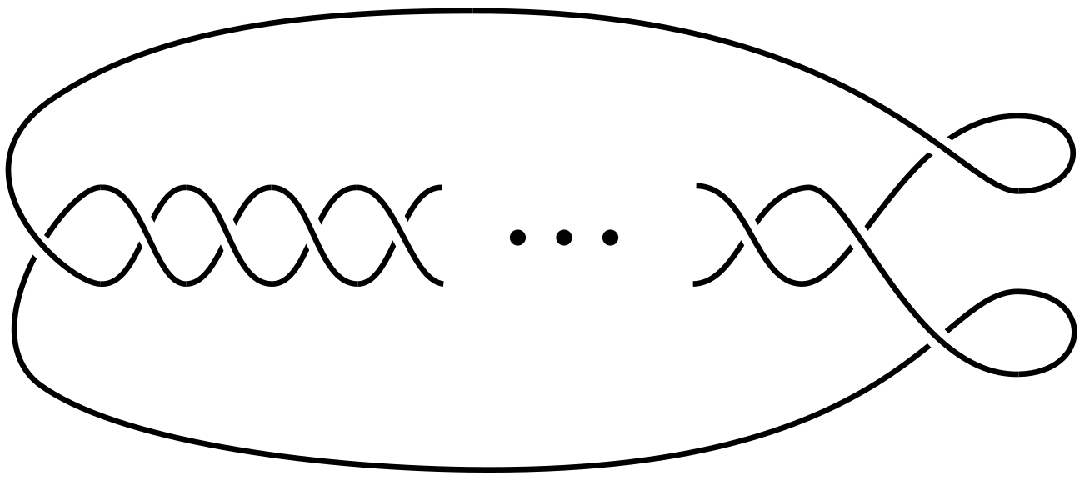}
\caption{The Lagrangian projection of the Legendrian $(2,n)$ torus knot.
The {\bf Lagrangian projection} is a projection from $\bbR^3$ to the $xy$-plane.}
\label{2n}
\end{center}
\end{minipage}
\end{figure}

In this paper, we study oriented exact Lagrangian fillings of the Legendrian $(2,n)$  torus links $\Lambda$ with maximal Thurston-Bennequin number ($n>0$).
When $n$ is even, we also require the link to have the right Maslov potential such that  Reeb chords $b_1, \dots, b_n$ in Figure \ref{2n} are in degree $0$ (see Section \ref{DGA} for detailed definitions). 
Ekholm, Honda, and K{\'a}lm{\'a}n \cite{EHK} gave an algorithm to construct exact Lagrangian fillings of the Legendrian $(2,n)$ torus link $\Lambda$ as follows.
Starting with a Lagrangian projection of $\Lambda$ as shown in Figure \ref{2n}, we can successively resolve  crossings $b_i$ in any order through  pinch moves (see Figure \ref{cob}), which  correspond to  saddle cobordisms.
As a result, we get two Legendrian unknots, which admit minimum cobordisms as shown in Figure \ref{cob}.
Concatenating the $n$ saddle cobordisms with these two minimum cobordisms, we get an exact Lagrangian filling of $\Lambda$.
\begin{figure}[!ht]
\labellist
\pinlabel  {The pinch move} at 30 -10
\pinlabel {The minimum cobordism} at 170 -10
\large
\pinlabel $\emptyset$ at 153 20
\endlabellist

\includegraphics[width=2.5in]{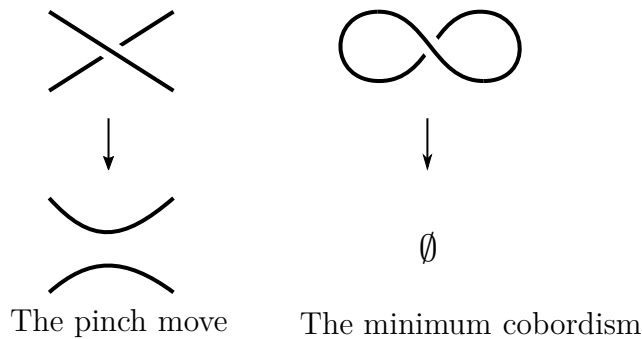}

\vspace{0.2in}

\caption{The pinch move and the minimum cobordism between Lagrangian projections of links.}
\label{cob}
\end{figure}

Different orders of resolving  crossings $b_1, \dots, b_n$ may give different exact Lagrangian fillings of $\Lambda$ up to exact Lagrangian isotopy.
Given a permutation $\sigma= \big(\sigma(1), \sigma(2), \dots, \sigma(n)\big)$ of $\{1, \dots, n\}$, write  $L_{\sigma}$ for the exact Lagrangian filling achieved by
 doing $n$ successive pinch moves at $b_{\sigma(1)}$, $b_{\sigma(2)}$, \dots , $b_{\sigma(n)}$, respectively, and then concatenating with the two minimum cobordisms.
Observe that two permutations may give isotopic exact Lagrangian fillings.
For instance, let $\Lambda$ be the Legendrian $(2,3)$ torus knot and consider the exact Lagrangian fillings of $\Lambda$ that correspond to permutations $(1,3,2)$ and $(3,1,2)$, respectively.
Since the saddles corresponding to the pinch moves at $b_1$ and $b_3$ are disjoint when projected to $\bbR^3$, one can use a Hamiltonian vector field in the $t$ direction to exchange the heights of these two saddles.
Therefore, the two fillings
$L_{(1,3,2)}$ and $L_{(3,1,2)}$ are Hamiltonian isotopic and thus are exact Lagrangian isotopic.
In general, for the Legendrian $(2,n)$ torus link $\Lambda$, given any numbers $i,j,k$ such that $i<k<j$, two permutations 
$(\dots, i, j, \dots, k, \dots)$ and $(\dots, j, i, \dots, k, \dots)$, where only $i$ and $j$ are interchanged, give the same exact Lagrangian fillings of $\Lambda$ up to exact Lagrangian isotopy.
Taking all the permutations of $\{1, \dots, n\}$ modded out by this relation, we obtain $C_n$
exact Lagrangian fillings of $\Lambda$,
where
$$C_n=\displaystyle{\frac{1}{n+1} {{2n}\choose{n}}}$$
is the $n$-th Catalan number.
In this paper, we prove the following theorem.
\begin{thm}[see Theorem \ref{knot} and Corollary \ref{link}]\label{main}
The $C_n$ exact Lagrangian fillings that come from the algorithm in  \cite{EHK}  are all of different exact Lagrangian isotopy classes.
In other words, the Legendrian $(2,n)$ torus link has at least $C_n$ exact Lagrangian fillings up to exact Lagrangian isotopy.
\end{thm}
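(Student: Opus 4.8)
The plan is to exploit the functoriality of the Chekanov--Eliashberg DGA $\calA(\Lambda)$ under exact Lagrangian cobordism. Each filling $L_\sigma$ induces an augmentation
$$\ve_\sigma \colon \calA(\Lambda) \longrightarrow \bbZ_2[H_1(L_\sigma)],$$
obtained by counting rigid holomorphic disks with boundary on $L_\sigma$ and recording their relative homology classes. The governing principle is that this augmentation is an invariant of the filling up to exact Lagrangian isotopy: an isotopy from $L_\sigma$ to $L_\tau$ induces an isomorphism $H_1(L_\sigma)\cong H_1(L_\tau)$, hence a ring isomorphism $\Phi$ of the two group rings, such that $\ve_\tau$ and $\Phi\circ\ve_\sigma$ become equivalent (DGA homotopic after this change of coefficients). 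Since in all cases a short Euler-characteristic count gives $H_1(L_\sigma)\cong\bbZ^{\,n-1}$, the isomorphism $\Phi$ is precisely a monomial change of variables indexed by an element of $GL_{n-1}(\bbZ)$. The theorem thus reduces to the combinatorial statement that for inequivalent permutation classes $\sigma\neq\tau$ there is \emph{no} element of $GL_{n-1}(\bbZ)$ whose induced substitution carries $\ve_\sigma$ to an augmentation DGA homotopic to $\ve_\tau$.

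To carry this out I would first settle the knot case $n$ odd (Theorem \ref{knot}) and then obtain the link case (Corollary \ref{link}) from the analogous computation, relating the fillings of the $(2,n)$ link to those of a neighbouring $(2,n\pm1)$ knot. The augmentation is pinned down by its values on the generators $b_1,\dots,b_n$ and $a_1,a_2$, subject to the constraints $\ve_\sigma\circ\del=0$. To compute these values explicitly, decompose $L_\sigma$ into its $n$ saddle cobordisms, performed at $b_{\sigma(1)},\dots,b_{\sigma(n)}$ in that order, followed by the two minimum cobordisms of Figure \ref{cob}. Each elementary cobordism induces an explicit DGA map by the formulas of \cite{EHK}, so $\ve_\sigma$ is their composite; fixing the basis of $H_1(L_\sigma)$ given by the loops created as successive pinches merge the two strands, one reads off each $\ve_\sigma(b_i)$ as a sum of monomials.

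This computation, which I would organize as an induction on $n$, should show that the monomials appearing in the tuple $\bigl(\ve_\sigma(b_1),\dots,\ve_\sigma(b_n)\bigr)$ encode the nesting order of the pinch moves, i.e.\ the triangulation of the $(n+2)$-gon (equivalently, the binary bracketing) attached to the class of $\sigma$; this is consistent with the enumeration, since the number of such triangulations is exactly $C_n$. From the augmentation I would then extract an intrinsic combinatorial datum---for instance the partial order on generators recorded by which monomials of $\ve_\sigma(b_i)$ divide which, or the corresponding Newton-polytope data---and verify that it recovers this triangulation.

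The main obstacle is the final invariance check: one must prove that the extracted datum is genuinely unchanged under both the $GL_{n-1}(\bbZ)$ monomial substitutions and DGA homotopy, so that it depends only on the exact Lagrangian isotopy class and not on the chosen identification of $H_1(L_\sigma)$. This is delicate because a change of variables can, a priori, merge or cancel monomials over $\bbZ_2$ and thereby distort the naive list of exponents; the argument must therefore isolate a feature of $\ve_\sigma$---its intrinsic ``shape'' rather than its coordinate expression---that is manifestly preserved by every admissible substitution and homotopy. Once that is in place, inequivalent permutation classes yield non-homotopic augmentations, so the $C_n$ fillings are pairwise non-isotopic, establishing both Theorem \ref{knot} and Corollary \ref{link}.
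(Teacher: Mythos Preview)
Your overall strategy matches the paper's: compute the induced augmentations $\ve_\sigma$ to $\bbZ_2[H_1(L_\sigma)]$ by composing the DGA maps of the elementary cobordisms, then show they are distinguished modulo the $GL_{n-1}(\bbZ)$ ambiguity coming from the identification of $H_1$. The two concerns you flag as the ``main obstacle'' are exactly where the paper's argument is sharper than your outline, and both turn out to be much easier than you suggest.

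First, the DGA homotopy ambiguity evaporates: because the Lagrangian projection of $\Lambda$ in Figure~\ref{2nast} has no Reeb chords of negative degree, any chain homotopy $H$ of augmentations has nowhere to land in degree $-1$, so homotopic augmentations are literally equal (this is the paper's Lemma~\ref{commute}). You therefore only need to distinguish the $\ve_\sigma$ up to postcomposition with a monomial automorphism in $GL_{n-1}(\bbZ)$.

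Second, the invariant the paper extracts is far simpler than Newton polytopes or divisibility posets: it is just the vector $C_\sigma=(C_\sigma^1,\dots,C_\sigma^n)$ where $C_\sigma^i$ is the \emph{number of monomials} in $\ve_\sigma(b_i)$. Your worry that a $GL_{n-1}(\bbZ)$ substitution might ``merge or cancel monomials over $\bbZ_2$'' is handled directly: an invertible monomial substitution sends distinct monomials to distinct monomials, so the count is preserved provided the monomials in each $\ve_\sigma(b_i)$ are distinct to begin with. The paper verifies this by a short inspection of the exponent vectors arising from the explicit formula in Theorem~\ref{augcompute} (Proposition~\ref{inva}). Finally, a direct combinatorial argument (Lemma~\ref{isotopy}) shows that $C_\sigma$ already determines the isotopy-equivalence class of $\sigma$, so there is no need to reconstruct the full triangulation. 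The even case is obtained, as you anticipated, by concatenating with one extra saddle to pass to the $(2,n+1)$ knot.
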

Shende, Treumann, Williams and Zaslow  \cite{STWZ} have also constructed $C_n$ exact Lagrangian fillings of the Legendrian $(2,n)$ torus knot using cluster varieties  and shown that they are distinct  up to Hamiltonian isotopy. 
They remarked that these are presumably the same as fillings obtained by \cite{EHK}.
But we do not resolve this issue here.

\begin{rmk}
We will see from Corollary \ref{link} that
the conclusion of Theorem \ref{main}  for the case $n$ even  can be derived from the result for the case when $n$ is odd. 
Therefore,  for most of the paper, we focus on the case when $n$ is odd, which means $\Lambda$ is  a knot.
\end{rmk}

Inspired by \cite{EHK}, we use augmentations to distinguish the $C_n$ exact Lagrangian fillings of 
the Legendrian $(2,n)$ torus knot $\Lambda$.
In order to talk about augmentations, we first introduce
the Chekanov-Eliashberg differential graded algebra (DGA) of a Legendrian knot $\Lambda$, which is a chain complex $(\calA(\Lambda), \partial)$. 
This is an invariant of Legendrian submanifolds introduced by Chekanov \cite{Che} and Eliashberg  \cite{Eli}  in the spirit of symplectic field theory \cite{EGH}.
The underlying algebra $\calA(\Lambda)$ of the Chekanov-Eliashberg DGA is freely generated by Reeb chords of $\Lambda$ over a commutative ring $\bbZ_2[H_1(\Lambda)]=\bbZ_2[s, s^{-1}]$, where
  Reeb chords of $\Lambda$  correspond to double points of the Lagrangian projection of $\Lambda$.
The differential is defined by a count of rigid holomorphic disks with boundary on $\Lambda$, taken with coefficients in $\bbZ_2[H_1(\Lambda)]$.
In general, the Chekanov-Eliashberg DGA of $\Lambda$ is defined with $\bbZ[H_1(\Lambda)]$ coefficients. 
For our purpose, it  suffices to consider the DGA with $\bbZ_2[H_1(\Lambda)]$ coefficients, which means ignoring the orientations of moduli spaces of holomorphic disks.
An {\bf augmentation} $\epsilon$ of $\calA(\Lambda)$ to a commutative ring $\bbF$ is a DGA map  $\epsilon: (\calA(\Lambda), \partial) \to (\bbF, 0)$.
As shown in \cite{EHK}, an exact Lagrangian filling $L$ of $\Lambda$ gives an augmentation of $\calA(\Lambda)$ by counting rigid holomorphic disks with boundary on $L$.
Moreover, by \cite[Theorem 1.3]{EHK}, exact Lagrangian isotopic fillings give homotopic augmentations.
Therefore, in order to distinguish two fillings, we only need to show their induced augmentations are not chain homotopic.

In \cite{EHK}, the authors distinguished all the exact Lagrangian fillings from the algorithm when $n=3$ by computing all the augmentations of the Legendrian $(2,3)$ torus knot  to $\bbZ_2$ and finding 
that they are pairwise non-chain homotopic. 
However, when $n\ge5$, a computation shows that the number of augmentations of the  DGA  to $\bbZ_2$  is much less than the Catalan number $C_n$.

In this paper, for an exact Lagrangian filling $L$ of the Legendrian $(2,n)$ torus knot $\Lambda$, 
we consider its induced augmentation of $\calA(\Lambda)$ to $\bbZ_2[H_1(L)]$, where $H_1(L)$ is the singular homology of $L$.
Note that $H_1(L) \cong H_2(\bbR\times \bbR^3, L)$ and thus it is natural to count the rigid holomorphic disks in $\bbR\times \bbR^3$ with boundary on $L$ with $\bbZ_2[H_1(L)]$ coefficients.
However, the computation of augmentations is not as easy as the case with $\bbZ_2$ coefficients.
For each exact Lagrangian filling $L$ from the \cite{EHK} algorithm, 
we give a combinatorial formula of the induced augmentation of $\calA(\Lambda)$ to $\bbZ_2[H_1(L)]$.
Observing from the formula, we find a combinatorial invariant to show that the augmentations are pairwise non-chain homotopic.
In this way, we distinguish all the $C_n$ exact Lagrangian fillings of the Legendrian $(2,n)$ torus knot $\Lambda$ up to exact Lagrangian isotopy.
\vspace{.2in}

{\bf Outline.}
In Section \ref{Prelim}, we  review the Chekanov-Eliashberg DGA of a Legendrian submanifold and the DGA maps induced by an exact Lagrangian cobordism. 
In Section \ref{results}, we compute all the augmentations of the Legendrian $(2,n)$ torus knot to $\bbZ_2[H_1(L)]$ induced by the exact Lagrangian fillings $L$ and prove  that all the resulting augmentations are distinct up to chain homotopy.
In the end, we prove Theorem \ref{main} for the case $n$ even as a corollary.
\vspace{.2in}

{\bf Acknowledgement.}
The author would like to thank Lenhard Ng for introducing the problem and many enlightening discussions. This work was partially supported by NSF  grants
DMS-0846346 and DMS-1406371.

\section{Preliminaries} \label{Prelim}
In Section \ref{DGA}, we review the definition of Chekanov-Eliashberg DGA of  Legendrian submanifolds in $(\bbR^3, \ker \alpha)$ and  its extension to the setting of multiple base points.
For the purpose of computing augmentations in Section \ref{aug}, 
the definition of DGA we use here is slightly different from the versions in \cite{NgRSFT} and \cite{NRSSZ}, where  the underlying algebra  is completely non-commutative.
In our definition, we allow elements in the coefficient ring to commute with the elements corresponding to Reeb chords. 
This is a generalization of the definition of Chekanov-Eliashberg DGA from \cite{ENS}.
See \cite[Section 2.3.2]{EENS} for further discussions.
In Section \ref{DGAmap}, we review the DGA map induced by an exact Lagrangian cobordism
and revise coefficients of this map for the purpose of computing augmentations in Section \ref{aug}.

\subsection{Chekanov-Eliashberg DGA}\label{DGA}

Let $\Lambda$ be a Legendrian submanifold in $(\bbR^3, \ker \alpha)$, where $\alpha=dz-ydx$.
There are two projection diagrams associated to $\Lambda$ via the {\bf Lagrangian projection} $\Pi_{xy}: \bbR^3 \to \bbR^2, \ (x,y,z) \mapsto (x, y)$ and the  {\bf front projection} $\Pi_{xz}: \bbR^3 \to \bbR^2, \ (x,y,z) \mapsto (x, z)$ respectively.
As an example, 
 a front projection and a Lagrangian projection of the Legendrian trefoil are shown in Figure \ref{trefoil}.
Moreover, starting from a front projection of $\Lambda$,  Ng \cite{Ngresolve} gave an algorithm to get a Lagrangian projection of $\Lambda$ by smoothing the cusps of the front projection in a way shown in Figure \ref{cusp}.

\begin{figure}[!ht]
\includegraphics[width=3in]{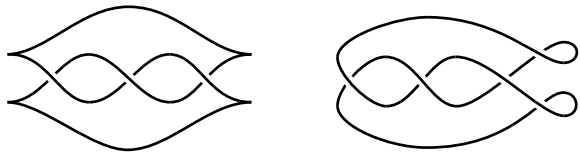}\\
\caption{A front projection (left) and a Lagrangian projection (right) of the Legendrian trefoil.}
\label{trefoil}
\end{figure}

\begin{figure}[!ht]
\includegraphics[width=3in]{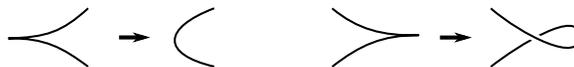}
\caption{Ng's algorithm to transfer a front projection to a Lagrangian projection by smoothing the left cusp directly and smoothing the right cusp with an additional crossing.}
\label{cusp}
\end{figure}

Let $\Lambda=\Lambda_1 \cup \Lambda_2 \cup \cdots \cup \Lambda_k$ be an oriented Legendrian link with $k$ connected components. 
Now let us define the Chekanov-Eliashberg DGA $\big(\calA(\Lambda; \bbZ_2[H_1(\Lambda)] ), \partial\big)$ of $\Lambda$.
To simplify the definition of  grading, we assume throughout the paper that the rotation number of $\Lambda$ is 0.
Note that all the Legendrian $(2,n)$ torus links we consider have rotation number $0$.

{\bf The underlying algebra.}
The underlying algebra $\calA(\Lambda; \bbZ_2[H_1(\Lambda)] )$ is a unital graded algebra freely generated by Reeb chords of $\Lambda$ over $\bbZ_2[H_1(\Lambda)] =\bbZ_2[s^{\pm 1}_1, s^{\pm 1}_2, \dots, s^{\pm 1}_k]$, 
where $\{s_1, s_2, \dots, s_k\}$ is any basis of $H_1(\Lambda)$.
A {\bf Reeb chord} of $\Lambda$ in $\big(\bbR^3, \ker \alpha\big)$ is a vertical line segment ($z$ direction) with both ends on $\Lambda$  endowed with an orientation in the positive $z$ direction.
 Reeb chords of $\Lambda$ are  in $1-1$ correspondence to double points of $\Pi_{xy}(\Lambda)$, which
 by Ng's algorithm correspond to the crossings and right cusps of $\Pi_{xz}(\Lambda)$.

To define the grading of Reeb chords, we work on the front projection $\Pi_{xz}(\Lambda)$. 
Write $C(\Pi_{xz}(\Lambda))$ for the set of cusps of $\Pi_{xz}(\Lambda)$, 
which divides $\Pi_{xz}(\Lambda)$ into strands (ignoring  double points).
The {\bf Maslov potential} is a function
$\mu$ 
that assigns an integer to each strand 
such that around each cusp, the Maslov potential of the lower strand is one less than that of the upper strand.
This is well defined up to a global shift on each component of $\Lambda$. 
When $n$ is even, we can choose a Maslov potential of the Legendrian  $(2,n)$ torus link such that for any Reeb chord $b_i$ as labeled in Figure \ref{2n}, the upper strand and the lower strand of $b_i$ have the same Maslov potential.
Once the Maslov potential is fixed, the grading of a Reeb chord $c$  that corresponds to a crossing of $\Pi_{xz}(\Lambda)$ can be defined by
$$|c| := \mu(u) - \mu(l),$$
where $u$ is the upper strand of the crossing and $l$ is the lower strand of the crossing.
The grading of  Reeb chords that correspond to  right cusps of $\Pi_{xz}(\Lambda)$ are defined to be $1$.
Extend the definition of grading to $\calA(\Lambda; \bbZ_2[H_1(\Lambda)] )$ by setting $|s_i|=0$ for $i=1, \dots, k$ and using the relation $|ab|=|a|+|b|$.

{\bf Differential.}
The differential $\partial$ is defined by counting rigid holomorphic disks in $\bbR^2_{xy}$ with boundary on $\Pi_{xy}(\Lambda)$.

For any Reeb chords $a, b_1, \dots, b_m$ of $\Lambda$,
define $\calM^{\Lambda}(a; b_1, \dots, b_m)$ to be the moduli space of holomorphic disks:
$$u: (D_{m+1}, \partial D_{m+1}) \to \big(\bbR^2, \Pi_{xy}(\Lambda)\big)$$
with the following properties:
\begin{itemize}
\item $D_{m+1}$ is a $2$-dimensional unit disk with $m+1$ points $s, t_1, \dots, t_m$ removed from the boundary and the points $s, t_1, \dots, t_m$ are labeled in counterclockwise order.
\item  $\displaystyle{\lim_{r \to s}u(r)=a}$ and the neighborhood of $a$ in the image of $u$ covers exactly one positive quadrant of the crossing (see Figure \ref{crossing}).
\item $\displaystyle{\lim_{r \to t_i} u(r)} =b_i$, for $i=1, \dots , m$,
and the neighborhood of $b_i$ in the image of $u$ covers exactly one negative quadrant of the crossing (see Figure \ref{crossing}).
\end{itemize}

\begin{figure}[!ht]
\labellist
\pinlabel  $+$ at 18 23
\pinlabel $+$ at 38 23
\pinlabel $-$ at 28 13
\pinlabel $-$ at 28 33
\endlabellist
\includegraphics[width=1in]{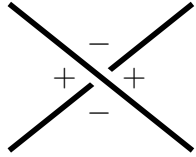}
\caption{At each crossing, the quadrants labeled with $+$ sign are {\bf positive quadrants} and 
the ones labeled with $-$ sign are {\bf negative quadrants}.}
\label{crossing}
\end{figure}

According to \cite{Che}, we have the following dimension formula:
$$\dim \calM^{\Lambda}(a; b_1, \dots, b_m)= |a| - \sum_{i=1}^{m} |b_i|-1.$$
When $\dim \calM^{\Lambda}(a; b_1, \dots, b_m)=0$, the disk $u \in  \calM^{\Lambda}(a; b_1, \dots, b_m)$ is called {\bf rigid}.
There are finitely many rigid holomorphic disks and hence we can count the number of rigid holomorphic disks.

In order to count with  $\bbZ_2[H_1(\Lambda)]$ coefficients, we want to take the homology class of the boundary of  rigid disks in $H_1(\Lambda)$.
However, for any rigid holomorphic disk $u$, the boundary $\Pi_{xy}^{-1}\big(u(\partial D_{m+1})\big)$ is not closed. 
Therefore, we introduce capping paths first.
Equip each connected component $\Lambda_i$ with a reference point $p_i$, for $i=1, \dots, k$. 
For each $i \neq 1$, pick a path $\delta_{1j}$ in $\bbR^3\setminus \Lambda$ that goes from $p_1$ to $p_j$. 
For each Reeb chord $c$ of $\Lambda$ from $c^{-} \in \Lambda_{i^{-}}$ to $c^{+} \in \Lambda_{i^{+}}$, 
the {\bf capping path} $\gamma_{c}$ is defined by concatenating the following four paths:
\begin{itemize}
\item a path on $\Lambda_{i^-}$ from $c^-$ to $p_{i^-}$ ,
\item the chosen path  $-\delta_{1i^{-}}$ connecting $p_{i^-}$ to $p_1$,
\item the chosen path $\delta_{1i^{+}}$ connecting  $p_1$ to $p_{i^+}$, 
\item a path on $\Lambda_{i^+}$ from $p_{i^+}$ to $c^+$.
\end{itemize}

See Figure \ref{capping} for an example of a capping path.

\begin{figure}[Iht]
\labellist
\pinlabel $c$ at 15 52
\pinlabel $c^-$ at -5 45
\pinlabel $c^+$ at -5 55
\pinlabel $p_{2}$ at  105 70
\pinlabel $p_{1}$ at  105 32
\pinlabel $\Lambda_1$ at 122 12
\pinlabel $\Lambda_2$ at 122 90
\endlabellist
\includegraphics[width=2.5in]{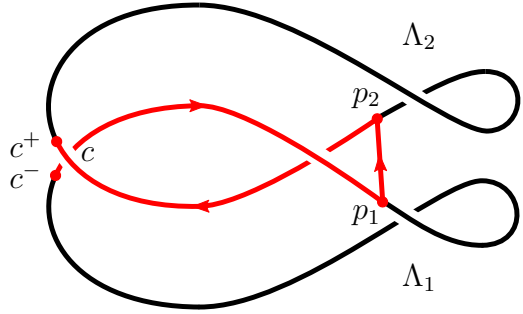}
\caption{Consider the Legendrian Hopf link $\Lambda_1\cup \Lambda_2$. For a Reeb chord $c$ from $c^{-} \in \Lambda_{1}$ to $c^{+} \in \Lambda_{2}$, the red curve is a capping path $\gamma_c$.}
\label{capping}
\end{figure}

After associating each Reeb chord with a capping path, for any rigid holomorphic disk 
$u \in \calM^{\Lambda}(a; b_1, \dots, b_m)$, the curve
$$\tilde{u}= \Pi_{xy}^{-1}\big(u(\partial D_{m+1})\big) \cup \gamma_{a} \cup -\gamma_{b_1} \cup \cdots \cup -\gamma_{b_m}$$
is a loop in $\Lambda \cup \delta_{12} \cup \cdots \cup \delta_{1k}$. 
Notice that $H_1(\Lambda \cup \delta_{12} \cup \cdots \cup \delta_{1k}) \cong H_1(\Lambda)$.
Thus we can view the homology class  $[\tilde{u}]$ as in $ H_1(\Lambda)$. 

Now we are ready to define the differential of the Chekanov-Eliashberg DGA of $\Lambda$.
\begin{defn}
For any Reeb chord $a$ of $\Lambda$, the differential $\partial$ is defined by:
\begin{equation}\label{differential}
\partial(a) = \displaystyle{\sum_{\dim \calM^{\Lambda}(a; b_1, \dots, b_m)=0}\  \sum_{u \in \calM^{\Lambda}(a; b_1, \dots, b_m)}[\tilde{u}]\  b_1\cdots b_m}.
\end{equation}
The definition of differential can be extended to $\calA(\Lambda; \bbZ_2[H_1(\Lambda)])$
by setting $\partial(s_i)=0$ for $i=1, \dots, k$, and using Leibniz rule
$$\partial(ab) =\partial(a)b + a \partial(b).$$
\end{defn}

According to \cite{Che}, the map  $\partial$ is a differential in degree $-1$. 
Moreover,
up to stable tame isomorphism, the Chekanov-Eliashberg DGA $\big(\calA(\Lambda; \bbZ_2[H_1(\Lambda)]), \partial\big)$ is an invariant of $\Lambda$ under Legendrian isotopy.

\begin{rmk}
In general, for any commutative ring $R$ and a ring homomorphism $\bbZ_2[H_1(\Lambda)]\to R$,  we define the Chekanov-Eliashberg DGA $\big(\calA(\Lambda; R), \partial \big)$ as 
 a tensor product of the DGA $\calA(\Lambda;\bbZ_2[H_1(\Lambda)])$ with the ring $R$:
$$\calA(\Lambda; R)= \calA(\Lambda;\bbZ_2[H_1(\Lambda)]) \otimes_{\bbZ_2[H_1(\Lambda)]} R,$$
where the ring homomorphism gives $R$ the structure of a module over $\bbZ_2[H_1(\Lambda)]$.
\end{rmk}

Now we give a combinatorial definition of the differential of $\big(\calA(\Lambda; \bbZ_2[H_1(\Lambda)]), \partial \big)$.
Assign $\Lambda$ an orientation and label  each component $\Lambda_i$, for $i=1, \dots, k$,  with a base point $s_i$, which is different from the reference point  and ends of Reeb chords. 
For a union of oriented curves $\gamma$ in $\Lambda \cup \delta_{12} \cup \cdots \cup \delta_{1k}$,
we associate it with a monomial $w(\gamma)$ in $\bbZ_2[H_1(\Lambda)]$ 
\begin{equation}\label{mon}
w(\gamma) = \displaystyle{\prod_{i=1}^{k}s_i^{n_i(\gamma)}},
\end{equation}
 where $n_i(\gamma)$ is the number of times $\gamma$ goes through $s_i$ counted with sign.
The sign is positive if  $\gamma$ goes through $s_i$ following  the link orientation and is negative if $\gamma$ goes through $s_i$ against the link orientation. 
In particular, for a rigid holomorphic disk $u \in \calM^{\Lambda}(a; b_1, \dots, b_m)$, we have 
$$[\tilde{u}]= w(\tilde{u})=w(u)w(\gamma_{a})\prod_{i=1}^{m} w(\gamma_{b_i})^{-1},$$
where $w(u)$ is short for $w\big(\Pi_{xy}^{-1}(u(\partial D_{m+1}))\big).$
Plugging it into the formula \eqref{differential}, we get a combinatorial definition of the differential.
It seems to depend on the choice of capping paths. 
However, we have the following well-known proposition.
\begin{prop} \label{invariant}
Let $\Lambda$ be a Legendrian link and $\gamma$, $\gamma'$ be two families of capping paths of Reeb chords of $\Lambda$.
The corresponding DGAs $\big(\calA^{\gamma}(\Lambda), \partial\big)$ and $\big(\calA^{\gamma'}(\Lambda), \partial'\big)$ are isomorphic.
\end{prop}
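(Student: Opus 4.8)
The plan is to exhibit an explicit graded algebra isomorphism $\Phi \colon \calA^{\gamma}(\Lambda) \to \calA^{\gamma'}(\Lambda)$ intertwining the two differentials, assembled from the ``difference classes'' of the two families of capping paths. For each Reeb chord $c$, both $\gamma_c$ and $\gamma'_c$ run from $c^-$ to $c^+$, so their formal difference is a cycle, and using the combinatorial weight of \eqref{mon} I would record it as the degree-$0$ group-like element
$$h_c := w(\gamma'_c)\, w(\gamma_c)^{-1} = \prod_{i=1}^k s_i^{\,n_i(\gamma'_c)-n_i(\gamma_c)} \in \bbZ_2[H_1(\Lambda)].$$
I then set $\Phi$ to be the identity on the coefficient ring $\bbZ_2[H_1(\Lambda)]$ and $\Phi(c) := h_c^{-1}\,c$ on each Reeb-chord generator, extended multiplicatively. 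Because each $h_c$ is invertible and $|h_c|=0$, the map $\Phi$ is automatically a degree-preserving algebra isomorphism, with inverse given on generators by $c \mapsto h_c\, c$.

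The heart of the argument is verifying that $\Phi$ is a chain map, $\Phi \circ \partial = \partial' \circ \Phi$, and it suffices to check this on a single generator $a$: for a rigid disk $u \in \calM^{\Lambda}(a; b_1, \dots, b_m)$ the only difference between its $\gamma$- and $\gamma'$-weights comes from the capping contributions at the punctures, namely $w(\gamma_a)$ versus $w(\gamma'_a)$ at the positive puncture and $w(\gamma_{b_i})^{-1}$ versus $w(\gamma'_{b_i})^{-1}$ at each negative puncture, while the boundary weight $w(u)$ is unchanged. Applying $\Phi$ to $\partial^{\gamma}(a)$ inserts a factor $h_{b_i}^{-1}$ at each $b_i$, and $w(\gamma_{b_i})^{-1}h_{b_i}^{-1}=w(\gamma'_{b_i})^{-1}$; dually, the prefactor $h_a^{-1}$ coming from $\Phi(a)$ converts $w(\gamma'_a)$ in $\partial^{\gamma'}(a)$ back into $w(\gamma_a)$. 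Both sides thus collapse onto the common expression $\sum_u w(u)\, w(\gamma_a) \prod_i w(\gamma'_{b_i})^{-1}\, b_1 \cdots b_m$, proving the identity on generators. Since $\partial$, $\partial'$ are derivations and $\Phi$ is an algebra map agreeing with the identity on coefficients (where both differentials vanish), the relation propagates to all of $\calA^{\gamma}(\Lambda)$ by a short induction on word length using the Leibniz rule.

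I expect the only genuine obstacle to be bookkeeping: tracking which capping factor sits at which puncture and confirming that the corrections $h_c$ cancel in the correct direction. This is exactly where the convention of Section \ref{DGA} that coefficients commute with Reeb chords is used, so that $h_c^{-1} c = c\, h_c^{-1}$ and the scalar factors may be slid freely past the monomial $b_1 \cdots b_m$. One point worth isolating separately is that $h_c$ is well defined in $H_1(\Lambda)$: this holds because $\gamma_c$ and $\gamma'_c$ share the endpoints $c^-, c^+$, so their difference is closed, and because the identification $H_1(\Lambda \cup \delta_{12} \cup \cdots \cup \delta_{1k}) \cong H_1(\Lambda)$ renders the auxiliary reference points and paths immaterial.
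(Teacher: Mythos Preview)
Your proposal is correct and is essentially identical to the paper's proof: your map $\Phi(c)=h_c^{-1}c$ with $h_c=w(\gamma'_c)w(\gamma_c)^{-1}$ is precisely the isomorphism $f(c)=w(\gamma'_c)^{-1}w(\gamma_c)\,c$ given there, and your chain-map check spells out exactly the computation the paper summarizes as ``not hard to check.''
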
 
\begin{proof}
For a Reeb chord $a$ of $\Lambda$, we have
$$\partial(a) = \displaystyle{\sum_{\dim \calM^{\Lambda}(a; b_1, \dots, b_m)=0}  \sum_{u \in \calM^{\Lambda}(a; b_1, \dots, b_m)} \left(w(u)w(\gamma_{a})\prod_{i=1}^{m} w(\gamma_{b_i})^{-1}\right)\ b_1\cdots b_m},$$
$$\partial'(a) = \displaystyle{\sum_{\dim \calM^{\Lambda}(a; b_1, \dots, b_m)=0} \sum_{u \in \calM^{\Lambda}(a; b_1, \dots, b_m)} \left( w(u)w(\gamma'_{a})\prod_{i=1}^{m} w(\gamma'_{b_i})^{-1}\right) \ b_1\cdots b_m}.$$

For each Reeb chord $c$,  concatenate $-\gamma'_c$ with $\gamma_c$  and get a closed curve, denoted
by $-\gamma'_{c}\cup \gamma_c$. 
It is not hard to check that the map
$$
\begin{array}{rll}
f: (\calA^{\gamma}(\Lambda), \partial) & \to & (\calA^{\gamma'}(\Lambda), \partial')\\
&&\\
c & \mapsto & [-\gamma'_c\cup \gamma_c]\  c=w(\gamma'_{c})^{-1}w(\gamma_{c})\ c\\
\end{array}
$$ 
 is  a chain map and is an isomorphism.
\end{proof}

Note that for an oriented link $\Lambda$ with minimal base points (i.e. each component has exactly one base point), 
we can choose a family of capping paths such that none of them pass through any base point. 
Therefore, we only need to count  intersections of the disk boundary and  base points.
Thanks to Proposition \ref{invariant}, we can define the Chekanov-Eliashberg DGA of $\Lambda$ to be
 a unital graded algebra over 
$\bbZ_2[H_1(\Lambda)]=\bbZ_2[s_1^{\pm  1}, \dots, s_k^{\pm  1}]$ generated by Reeb chords of $\Lambda$ endowed with a differential given by
$$
\begin{array}{rl}
\partial(a) &= \displaystyle{\sum_{\dim \calM^{\Lambda}(a; b_1, \dots, b_m)|=0}  \sum_{u \in \calM^{\Lambda}(a; b_1, \dots, b_m)}w(u)b_1\cdots b_m},\\
&\\

\partial(s_i)&=0, \hspace{0.1in} i= 1, \dots, k,\\
\end{array}
$$
where $w(u)$ is defined by formula \eqref{mon}. 
This DGA is denoted by $\big(\calA(\Lambda, \{s_1, \dots, s_k\}), \partial\big)$ as well.

\begin{eg}
\begin{figure}[!ht]
\labellist
\pinlabel $b_1$ at 12 30
\pinlabel $b_2$ at 55 30
\pinlabel $b_3$ at 105 30
\pinlabel $a_1$ at 125 50
\pinlabel $a_2$ at 125 18
\pinlabel $s$ at  135 73
\endlabellist
\includegraphics[width=2in]{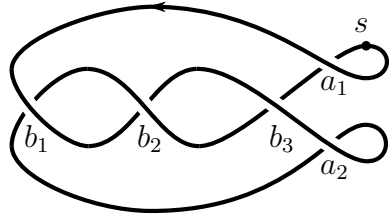}
\caption{The Lagrangian projection of the Legendrian $(2,3)$ torus knot with a single  base point.}
\label{23}
\end{figure}
For the Legendrian $(2,3)$ torus knot $\Lambda$ with a single base point $s$ as shown in Figure \ref{23}. 
The underlying algebra $\calA(\Lambda, \{s\})$ is generated by Reeb chords $a_1, a_2, b_1, b_2, b_3$  over $\bbZ_2[s, s^{-1}]$.
Reeb chords $a_1$ and $a_2$ are in degree $1$ and the rest of Reeb chords are in degree $0$.
The differential is given by:
$$
\def\arraystretch{1.5}
\begin{array}{rll}
\partial(a_1)&=& s^{-1} + b_1 + b_3 + b_1b_2b_3,\\
\partial(a_2)&=& 1+b_1+b_3 + b_3b_2b_1,\\
\partial(b_i)&=&0, \hspace{0.2in} i=1,2,3,\\
\partial(s)&=&\partial(s^{-1})=0.
\end{array}
$$

\end{eg}
The definition of DGA of Legendrian link can be generalized to the case where there are more than one base point on  some components of the link.
Let $\Lambda$ be an oriented Legendrian link and $\{s_1, \dots, s_l\}$ be a set of points on $\Lambda$ such that each component of $\Lambda$ has at least one point in the set
and the set does not include any end of any Reeb chord of $\Lambda$.
For a union of paths $\gamma$, associate it with a monomial
 $w(\gamma) = \displaystyle{\prod_{j=1}^{l}s_j^{n_j(\gamma)}}$  in $\bbZ_2[s_1^{\pm1}, \dots, s_l^{\pm1}]$, where $n_j$ is defined similar as above. 
The DGA $\big(\calA(\Lambda, \{s_1, \dots, s_l\}), \partial\big)$ is a unital graded algebra 
generated by Reeb chords of $\Lambda$ over $\bbZ_2[s_1^{\pm 1}, \dots, s_l^{\pm 1}]$
endowed with a differential
given by
$$
\begin{array}{rl}
\partial(a) &= \displaystyle{\sum_{\dim \calM^{\Lambda}(a; b_1, \dots, b_m)|=0}  \sum_{u \in \calM^{\Lambda}(a; b_1, \dots, b_m)}w(u)b_1\cdots b_m},\\
&\\

\partial(s_i)&=0, \hspace{0.1in} i=1,\dots, l.\\
\end{array}
$$

\subsection{The DGA map induced by exact Lagrangian cobordisms}\label{DGAmap}
According to \cite{EHK}, the Chekanov-Eliashberg DGA acts functorially on  exact Lagrangian cobordisms. 
We first recall the definition of exact Lagrangian cobordisms.

\begin{defn}
Let $\Lambda_+$ and $\Lambda_-$ be Legendrian submanifolds in $(\bbR^3, \ker \alpha)$,  where $\alpha=dz-y dx$.
An {\bf exact Lagrangian cobordism} $\Sigma$ from $\Lambda_-$ to $\Lambda_+$ is 
a $2$-dimensional surface in $\big(\bbR\times \bbR^3, d(e^t\alpha)\big)$ such that 
there exists $T>0$ such that  $\Sigma$ is
\begin{itemize}
\item cylindrical over $\Lambda_+$ on the positive end, i.e. $\Sigma \cap \big((T,\infty)\times \bbR^3\big)= (T,\infty) \times \Lambda_+$;
\item cylindrical over $\Lambda_-$ on the negative end, i.e. $\Sigma \cap \big((-\infty,-T)\times \bbR^3\big)= (-\infty,-T) \times \Lambda_-$;
\item compact in $[-T, T]\times \bbR^3$,
\end{itemize}
and $e^t\alpha \big\vert_{T\Sigma}=df$\vspace{0.02in} for some function $f: \Sigma \to \bbR$. (See Figure \ref{lagcob}.)

When $\Lambda_-$ is empty, the surface $L$ satisfying the conditions above  is called an {\bf
exact Lagrangian filling} of $\Lambda_+$.
\end{defn}

\begin{figure}[!ht]
\begin{minipage}{3in}
\begin{center}
\labellist
\small

\pinlabel $t$ at -14 350
\pinlabel $\Lambda_+$  at 345 310
\pinlabel $\Sigma$  at 345 220
\pinlabel $\Lambda_-$ at 345 120

\pinlabel $N$ at -14 250
\pinlabel $-N$ at -24 50
\endlabellist

\includegraphics[width=2in]{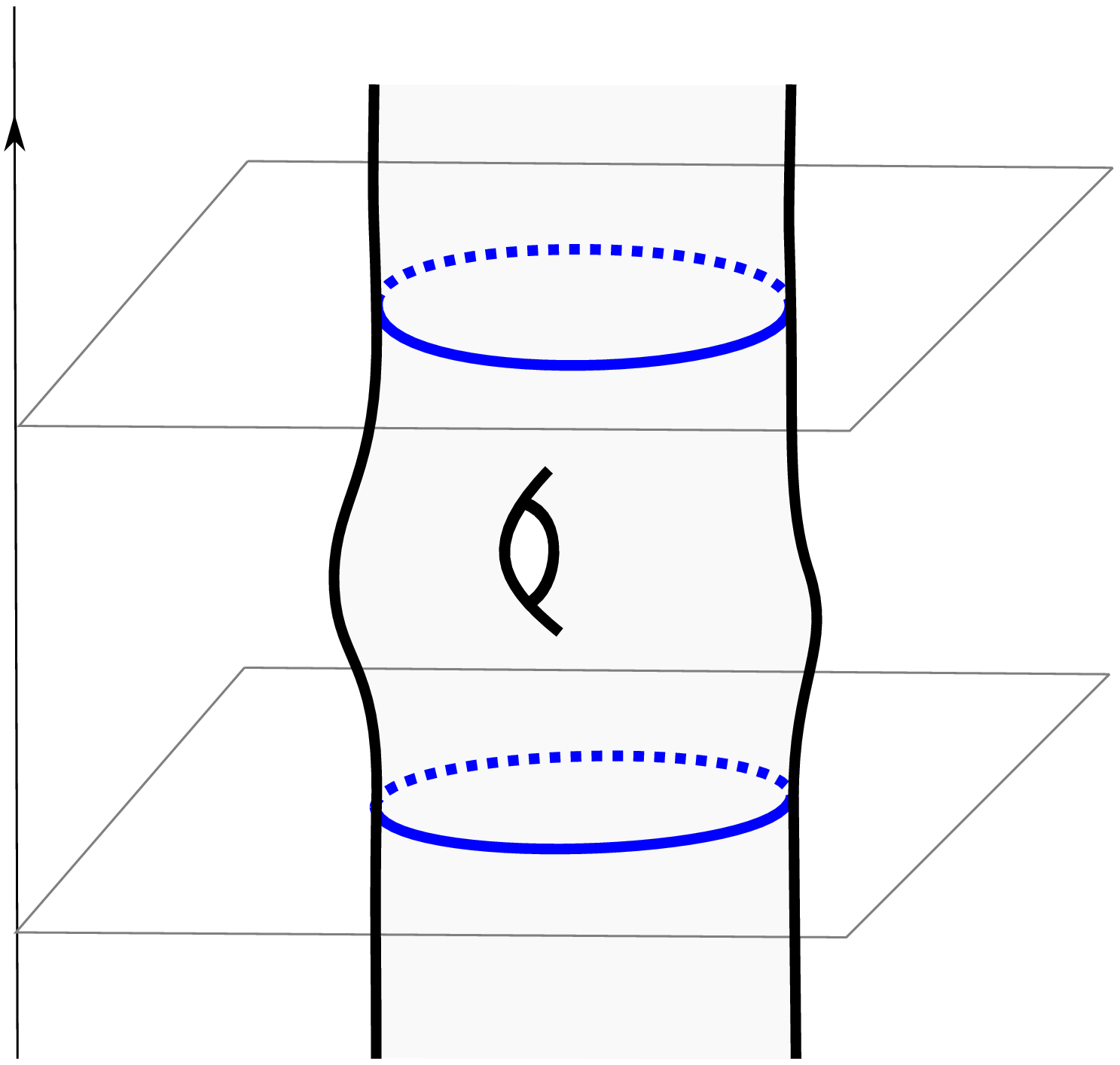}
\end{center}
\labellist
\small

\endlabellist

\caption{A schematic picture of an exact Lagrangian cobordism.}
\label{lagcob}
\end{minipage}
\begin{minipage}{3in}
\begin{center}
\labellist
\small
\pinlabel $\overline{\Sigma}_-$ at 12 94
\pinlabel $\overline{\Sigma}_+$ at 38 123
\pinlabel $\Sigma$ at 35 52
\pinlabel $\Lambda_+$ at  135 93
\pinlabel $\Lambda_-$ at 132 10
\endlabellist
\includegraphics[width=1.5in]{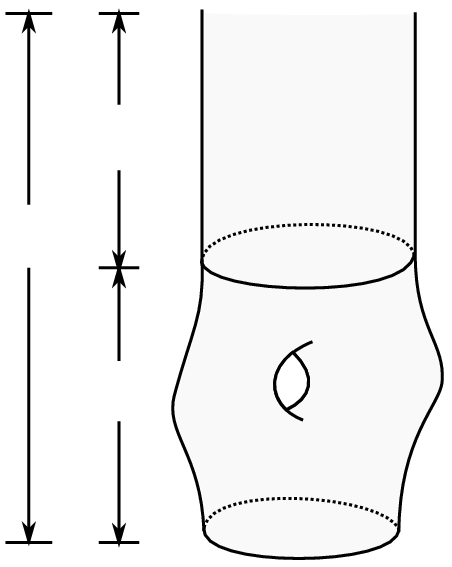}
\end{center}
\caption{The relation among cobordisms $\overline{\Sigma}_+$, $\overline{\Sigma}_-$, and $\Sigma$. }
\label{concatenation}
\end{minipage}
\end{figure}

By \cite{EHK}, an exact Lagrangian cobordism $\Sigma$ from $\Lambda_-$ to $\Lambda_+$ gives a DGA map from $\calA(\Lambda_+)$ to $\calA(\Lambda_-)$ with $\bbZ_2[H_1(\Sigma)]$ coefficients.
Therefore, an exact Lagrangian filling $L$ of a Legendrian submanifold $\Lambda$, which can be viewed as a cobordism from the empty set to $\Lambda$, gives a DGA map from $\calA(\Lambda)$ to 
the trivial DGA $(\bbZ_2[H_1(L)],0 )$, which is an augmentation of  $\calA(\Lambda)$ 
to 
$\bbZ_2[H_1(L)]$.

For the purpose of  computing augmentations of the Legendrian $(2,n)$ torus knots in Section \ref{aug}, we  revise the coefficient ring of the DGA map induced by  exact Lagrangian cobordisms from \cite{EHK}.
In stead of using $\bbZ_2[H_1(\Sigma)]$ coefficients, we will show the following proposition:

\begin{prop} \label{map}
Let  $\Lambda_+$ and $\Lambda_-$ be Legendrian submanifolds in $(\bbR^3, \ker \alpha)$
and $\Sigma$ be a connected exact Lagrangian cobordism from $\Lambda_-$ to $\Lambda_+$.
Assume that
$\overline{\Sigma}_+$ is a connected exact Lagrangian cobordism from  $\Lambda_+$ to some other Legendrian link and
 $\overline{\Sigma}_-$ is the concatenation of $ \overline{\Sigma}_+$ and $\Sigma$
 as shown in Figure \ref{concatenation}. 
 The exact Lagrangian cobordism $\Sigma$ induces a  DGA map
 $$\Phi: \big(\calA(\Lambda_+; \bbZ_2[H_1(\overline{\Sigma}_+)]), \partial_+\big) \to \big(\calA(\Lambda_-; \bbZ_2[H_1(\overline{\Sigma}_-)]), \partial_-\big) $$
 with $\bbZ_2[H_1(\overline{\Sigma}_-)]$ coefficients.
 \end{prop}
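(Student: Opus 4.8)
The strategy is to take the cobordism map of \cite{EHK} as a black box for all of its analytic content---transversality, compactness, and the gluing analysis of broken holomorphic disks---and to reprove only the bookkeeping of homology classes, arranging the capping paths so that every class is routed into the asserted coefficient ring. Concretely, I would first fix a coherent system of capping paths. For each Reeb chord $c$ of $\Lambda_+$ I choose a capping path $\gamma_c$ lying in $\overline{\Sigma}_+$ (this is possible because $\overline{\Sigma}_+$ is connected and $\Lambda_+$ is its negative end), and for each Reeb chord $d$ of $\Lambda_-$ a capping path $\gamma_d$ lying in $\overline{\Sigma}_-$. Since $\overline{\Sigma}_+ \subset \overline{\Sigma}_-$ (see Figure \ref{concatenation}), the paths chosen for $\Lambda_+$ also lie in $\overline{\Sigma}_-$, so the two choices are compatible, and by Proposition \ref{invariant} the resulting DGAs are independent of the choices up to isomorphism.

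With this data the three relevant disk counts land in the claimed homology groups. For $\partial_+$, a rigid disk at the positive cylindrical end has boundary on $\Lambda_+$; capping its punctures by the $\gamma_c \subset \overline{\Sigma}_+$ produces a loop in $\overline{\Sigma}_+$, hence a class in $H_1(\overline{\Sigma}_+)$. For $\partial_-$, a rigid disk at the negative end has boundary on $\Lambda_-$ and, capped by the $\gamma_d \subset \overline{\Sigma}_-$, gives a class in $H_1(\overline{\Sigma}_-)$. For $\Phi$, I count rigid disks in $\bbR \times \bbR^3$ with boundary on $\Sigma$, one positive puncture at a Reeb chord $a$ of $\Lambda_+$ and negative punctures at Reeb chords $b_1,\dots,b_m$ of $\Lambda_-$; capping $a$ by $\gamma_a \subset \overline{\Sigma}_+$ and each $b_i$ by $\gamma_{b_i} \subset \overline{\Sigma}_-$ produces a loop contained in $\Sigma \cup \overline{\Sigma}_+ \subset \overline{\Sigma}_-$, hence a class $[\tilde{u}] \in H_1(\overline{\Sigma}_-)$. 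I then set $\Phi(a)=\sum_u [\tilde{u}]\, b_1\cdots b_m$, extend it as a $\bbZ_2$-algebra map, and declare it to carry each coefficient $s\in H_1(\overline{\Sigma}_+)$ to its image under the inclusion-induced homomorphism $\phi\colon \bbZ_2[H_1(\overline{\Sigma}_+)] \to \bbZ_2[H_1(\overline{\Sigma}_-)]$; this $\phi$ is exactly the coefficient change that gives meaning to ``DGA map with $\bbZ_2[H_1(\overline{\Sigma}_-)]$ coefficients.''

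The chain-map identity $\Phi \circ \partial_+ = \partial_- \circ \Phi$ then follows from the \cite{EHK} description of the boundary of the one-dimensional moduli spaces of $\Sigma$-disks: each such boundary point is a once-broken configuration, either a positive-end $\Lambda_+$-disk glued to $\Sigma$-disks, contributing $\Phi(\partial_+ a)$, or a $\Sigma$-disk glued along a negative puncture to a $\Lambda_-$-disk, contributing $\partial_-(\Phi a)$. The only point requiring verification beyond \cite{EHK} is that the homology weights agree across a breaking. At a positive-end breaking the intermediate chord $c$ of $\Lambda_+$ is capped by $-\gamma_c$ on the $\Lambda_+$-disk and by $+\gamma_c$ on the adjacent $\Sigma$-disk, both inside $\overline{\Sigma}_+$, so the capping contributions cancel and the two classes add to that of the glued disk in $H_1(\overline{\Sigma}_-)$; the $H_1(\overline{\Sigma}_+)$-class of the $\Lambda_+$-disk enters only after applying $\phi$, which is precisely the compatibility of $\Phi$ with the coefficient homomorphism. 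The analogous cancellation of $\gamma_d$ holds at negative-end breakings. Summing over all boundary points of the one-dimensional moduli spaces and working over $\bbZ_2$ yields the identity.

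I expect the main obstacle to be exactly this last bookkeeping step: ensuring that the capping paths used to define $\partial_+$ and those used at the positive puncture of $\Phi$ are literally the same path (so that they cancel at a breaking), and likewise at the negative end, while keeping all three counts' weights in their designated groups. The analytic input is entirely inherited from \cite{EHK}; the genuine content here is organizing the capping data so that the inclusions $\Sigma \hookrightarrow \overline{\Sigma}_-$ and $\overline{\Sigma}_+ \hookrightarrow \overline{\Sigma}_-$ send every homology class into the correct ring.
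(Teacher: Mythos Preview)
Your proposal is correct and takes essentially the same approach as the paper: both choose capping paths for $\Lambda_+$ on $\overline{\Sigma}_+$ and for $\Lambda_-$ on $\overline{\Sigma}_-$, route all homology classes into $H_1(\overline{\Sigma}_-)$ via the inclusion $\overline{\Sigma}_+ \hookrightarrow \overline{\Sigma}_-$, and defer the analytic content to \cite{EHK}. The only cosmetic difference is that the paper formally factors the map as $\Phi = \phi \circ \psi$ (first the coefficient change $\psi$ induced by inclusion, then the disk count $\phi$ over $\bbZ_2[H_1(\overline{\Sigma}_-)]$), whereas you define $\Phi$ in one step and record the coefficient homomorphism as part of its definition; the breaking/cancellation bookkeeping you spell out is exactly what underlies the paper's citation of \cite[Section 3.5]{EHK}.
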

 
 Note that when $\overline{\Sigma}_+$ is an exact Lagrangian cylinder over $\Lambda_+$, this map agrees with the DGA map introduced by \cite{EHK}.
 The proof of Proposition \ref{map}  follows \cite[Section 3]{EHK}.
 Our revision of the coefficient ring is based on a different choice of capping paths of $\Lambda_+$ and $\Lambda_-$.
 The capping paths of are chosen on $\Sigma$ in \cite{EHK} while we choose capping paths of $\Lambda_+$ on $\overline{\Sigma}_+$ and capping paths of $\Lambda_-$ on $\overline{\Sigma}_-$.
 For the rest of the section, we will describe this DGA map.

The inclusion map $\Lambda_+ \hookrightarrow \overline{\Sigma}_+$ makes it natural to define the DGA $\big(\calA(\Lambda_{+}; \bbZ_2[H_1(\overline{\Sigma}_{+})]), \partial_+\big)$.
The underlying algebra  $$\calA(\Lambda_{+}; \bbZ_2[H_1(\overline{\Sigma}_{+})])= \calA(\Lambda_{+}; \bbZ_2[H_1(\Lambda_+)])\otimes_{\bbZ_2[H_1(\Lambda_+)]}  \bbZ_2[H_1(\overline{\Sigma}_{+})]$$
is generated by Reeb chords of $\Lambda_+$ over the ring $\bbZ_2[H_1(\overline{\Sigma}_{+})]$.
Given that $\overline{\Sigma}_{+}$ is connected, we can choose a family of capping paths for $\Lambda_{+}$ on $\overline{\Sigma}_{+}$. Therefore, for any rigid holomorphic disk $u_{+}$ counted by $\partial_{+}$, it is natural to take the homology class of $\tilde{u}_{+}$ in $H_1(\overline{\Sigma}_{+})$.
Hence the differential coefficients of $\partial_{+}$ are in $\bbZ_2[H_1(\overline{\Sigma}_{+})]$.
In addition, the DGA  $\big(\calA(\Lambda_{+}; \bbZ_2[H_1(\overline{\Sigma}_{+})]), \partial_+\big)$ does not depend on the choice of capping paths on $\overline{\Sigma}_{+}$ for a similar reason as Proposition \ref{invariant}.
The DGA $\big(\calA(\Lambda_{-}; \bbZ_2[H_1(\overline{\Sigma}_{-})]), \partial_-\big)$ is defined similarly.

The DGA  map $\Phi$ induced by $\Sigma$ is a composition of two maps.
The first map 
$$\psi: \big(\calA(\Lambda_+; \bbZ_2[H_1(\overline{\Sigma}_+)]), \partial_+\big) \to \big(\calA(\Lambda_+; \bbZ_2[H_1(\overline{\Sigma}_-)]), \partial_+\big)$$
is induced by the inclusion map $\overline{\Sigma}_+ \hookrightarrow \overline{\Sigma}_-$.
It is not hard to show $\psi$ is a DGA map.
The second map
$$\phi : \big(\calA(\Lambda_+; \bbZ_2[H_1(\overline{\Sigma}_-)]), \partial_+\big) \to \big(\calA(\Lambda_-; \bbZ_2[H_1(\overline{\Sigma}_-)]), \partial_-\big)$$
is defined by counting rigid holomorphic disks in $\bbR\times \bbR^3$ with boundary on $\Sigma$.

Fix an almost complex structure $J$ on $\bbR\times\bbR^3$ which is adjusted to the symplectic form $\omega$ (see  \cite[Section 3.2]{EHK} for details).
For a Reeb chord $a$ of $\Lambda_+$ and Reeb chords $b_1, \dots , b_m$ of $\Lambda_-$, 
define $\calM^{\Sigma}(a; b_1,\dots, b_m)$ to be the moduli space of $J$-holomorphic disks:
$$u: (D_{m+1}, \partial D_{m+1} ) \to (\bbR\times \bbR^3, \Sigma)$$
with the following properties:
\begin{itemize}
\item $D_{m+1}$ is a $2$-dimensional unit disk with $m+1$ points $r, s_1, s_2, \dots , s_m$ removed. 
The points $r, s_1, s_2, \dots , s_m$ are arranged on the boundary of the disk counterclockwise.
\item The image of $u$  is asymptotic to a strip $\bbR_+ \times a$ around $r$.
\item The image of $u$  is asymptotic to a strip  $\bbR_- \times b_i$ around $s_i$ for $i=1, \dots , m$.
\end{itemize}

By \cite{CEL}, there is a corresponding dimension formula:
$$\dim \calM^{\Sigma}(a; b_1, \dots, b_m)= |a|-\sum_{i=1}^m|b_i|.
$$
When $\dim \calM^{\Sigma}(a; b_1, \dots, b_m)=0$, the $J$-holomorphic disk $u \in \calM^{\Sigma}(a; b_1,\dots, b_m)$
is called {\bf rigid}.
For each rigid $J$-holomorphic disk $u$, 
  concatenate the image of the disk boundary with the capping paths of corresponding Reeb chords on $\overline{\Sigma}_-$ and get 
$$\tilde{u} = u(\partial D_{m+1}) \cup \gamma_a \cup -\gamma_{b_1} \cup \cdots \cup -\gamma_{b_m},$$ 
which is a loop in $\overline{\Sigma}_-$. 
Hence we can take the homology class of $\tilde{u}$ in $H_1(\overline{\Sigma}_-)$, denoted by $[\tilde{u}]_{\overline{\Sigma}_-}$.
The  map 
$$\phi : \big(\calA(\Lambda_+; \bbZ_2[H_1(\overline{\Sigma}_-)]), \partial_+\big) \to \big(\calA(\Lambda_-; \bbZ_2[H_1(\overline{\Sigma}_-)]), \partial_-\big)$$
is defined as follows.
For any  Reeb chord $a$ of $\Lambda_+$, the map $\phi$ maps $a$ to 
$$\phi(a) = \displaystyle{\sum_{\dim \calM^{\Sigma}(a; b_1, \dots, b_m)=0} \sum_{ u \in \calM^{\Sigma}(a; b_1, \dots, b_m)}[u]_{\overline{\Sigma}_-} b_1\cdots b_m}.$$
The map $\phi$ is identity on $\bbZ_1[H_1(\overline{\Sigma}_-)]$.
By \cite[Section 3.5]{EHK}, the map $\phi$ is a DGA map.

Therefore, the exact Lagrangian cobordism $\Sigma$ induces a DGA map $\Phi=\phi \circ  \psi$  
$$\Phi: \big(\calA(\Lambda_+; \bbZ_2[H_1(\overline{\Sigma}_+)]), \partial_+\big) \to \big(\calA(\Lambda_-; \bbZ_2[H_1(\overline{\Sigma}_-)]), \partial_-\big).$$

\vspace{0.2in}

\section{Main Results}\label{results}
We consider the exact Lagrangian fillings of the Legendrian $(2,n)$ torus knot contructed from the \cite{EHK} algorithm.
Each filling can be achieved by concatenating $n$ successive saddle cobordisms with two minimum cobordisms.
In Section \ref{aug}, we combine results in \cite{EHK} and  Proposition \ref{map} to write down  combinatorial formulas for the DGA maps induced by a pinch move and a minimum cobordism.
Composing all the DGA maps induced by $n$ ordered pinch moves and the two minimum cobordisms,
we obtain a combinatorial formula for  augmentations of $\calA(\Lambda)$ to $\bbZ_2[H_1(L)]$ induced by exact Lagrangian fillings $L$.
In Section \ref{equi},
we find a combinatorial invariant to distinguish these resulting augmentations and hence we show that the $C_n$ exact Lagrangian fillings are distinct up to exact Lagrangian isotopy.
As a corollary, we extend the result to the case $n$ is even.

\subsection {Computation of augmentations} \label{aug}

Consider the Lagrangian projection of the Legendrian $(2,n)$ torus knot $\Lambda$ with a base point $\tilde{s}_0$ and label the $n$ crossings in degree $0$ from left to right by $b_1, \dots , b_n$ as shown in Figure \ref{2nast}. 
\begin{figure}[!ht]
\labellist
\pinlabel  $b_1$ at 16 47
\pinlabel  $b_2$ at 47 47
\pinlabel $b_n$ at  252 47
\pinlabel $a_1$ at 273 115
\pinlabel $a_2$ at 275 23
\pinlabel $\tilde{s}_0$ at 310 113
\endlabellist
\includegraphics[width=2.8in]{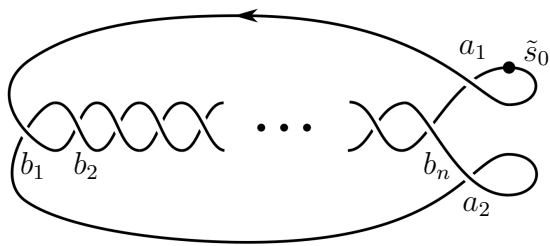}
\caption{The Lagrangian projection of the Legendrian $(2,n)$ torus knot with a base point.}
\label{2nast}
\end{figure}

For each permutation $\sigma$ of $\{1, \dots, n\}$,
the corresponding exact Lagrangian filling $L_{\sigma}$ of the Legendrian $(2,n)$ torus knot $\Lambda$ 
is achieved in the following way:
\begin{itemize}
\item
Start with an exact Lagrangian cylinder over $\Lambda$, denoted by $\overline{\Sigma}_0$. Label $\Lambda$ as $\Lambda_0$.
\item
For $i=1, \dots, n$, concatenate $\overline{\Sigma}_{i-1}$ from the bottom with a saddle cobordism $\Sigma_i$ corresponding to the pinch move at crossing $b_{\sigma(i)}$ and get a new exact Lagrangian cobordism $\overline{\Sigma}_{i}$. Label the new Legendrian submanifold after pinch move as $\Lambda_i$.
\item
Finally, use two minimal cobordisms, denoted by $\Sigma_{n+1}$,  to close up $\overline{\Sigma}_n$ from the bottom and get the exact Lagrangian filling $L_{\sigma}$. 
To be consistent, let $\Lambda_{n+1}$ be the empty set.
\end{itemize}

By Proposition \ref{map}, for $i=1, \dots, n+1$,
 each exact Lagrangian cobordism $\Sigma_i$ induces a DGA map: 
 $$\Phi_i: \big(\calA(\Lambda_{i-1}; \bbZ_2[H_1(\overline{\Sigma}_{i-1})]), \partial_{i-1}\big) \to \big(\calA(\Lambda_i; \bbZ_2[H_1(\overline{\Sigma}_i)]), \partial_i\big).$$
The map $\Phi_{n+1}$ that is induced by  minimum cobordisms is well understood
while the maps $\Phi_i$ for $i=1, \dots, n$ that correspond to pinch moves are not.
We will first study $H_1(\overline{\Sigma}_n)$ and give a geometric description of the DGA map that corresponds to a pinch move.
Combining with \cite{EHK}, we will write down an explicit combinatorial formula for each $\Phi_i$, for $i=1, \dots, n+1$.
 
To describe $H_1(\overline{\Sigma}_n)$ easily, we chop off the cylindrical top of $\overline{\Sigma}_n$ and view it as a surface with boundary $\Lambda\cup \Lambda_n$, denoted by $\overline{\Sigma}_n$ as well. 
By Poincar{\'e} duality, we have $H^1(\overline{\Sigma}_n) \cong H_1(\overline{\Sigma}_n, \Lambda \cup \Lambda_n)$. 
In particular,
for each oriented curve $\alpha$ in $\overline{\Sigma}_n$ with ends on $\Lambda \cup \Lambda_n$, which is an element in $H_1(\overline{\Sigma}_n, \Lambda \cup \Lambda_n)$, there exists an element $\theta_{\alpha} \in H^1(\overline{\Sigma}_n)$
such that for any oriented loop $\beta$ in $\overline{\Sigma}_n$, the intersection number of $\alpha$ and $\beta$ is $\theta_{\alpha}(\beta)$.
Thus, in order to know the homology class of a loop $\beta$ in $H_1(\overline{\Sigma}_n)$, 
we only need to count the intersection number of each generator curve of $H_1(\overline{\Sigma}_n, \Lambda \cup \Lambda_n)$ with $\beta$.

We choose the set of generator curves of $H_1(\overline{\Sigma}_n, \Lambda \cup \Lambda_n)$ as follows.
Use $t$ coordinate to slice $\overline{\Sigma}_n$ into a movie of diagrams (some of them may not be Legendrian diagrams).
We study the trace of points on the diagram when $t$ is decreasing.
For $i=1, \dots, n$, the saddle cobordism $\Sigma_i$ flows all the points directly downward except ends of the Reeb chord $b_{\sigma(i)}$.
According to \cite{Lin}, the ends of the Reeb chord $b_{\sigma(i)}$ merge to a point $r_{\sigma(i)}$, and then split into two points, labeled as $\tilde{s}_{\sigma(i)}$ and $\tilde{s}_{\sigma(i)}^{-1}$ respectively.
Now for $i=1,\dots, n$, consider the trace of $\tilde{s}$ in $\overline{\Sigma}_n$, which is a flow line from $r_i$ to the bottom of $\overline{\Sigma}_n$.
Concatenating it with the inverse trace of $\tilde{s}_{i}^{-1}$ in $\overline{\Sigma}_n$, we get a curve $\alpha_i$ in $\overline{\Sigma}_n$  as shown in Figure \ref{trace}.
In addition, denote the trace of the base point $\tilde{s}_0$ in $\overline{\Sigma}_n$ by $\alpha_{0}$.
In this way, we have that $\alpha =\{\alpha_0, \alpha_{1}, \dots, \alpha_{n}\}$ is a set of generator curves of $H_1(\overline{\Sigma}_n, \Lambda \cup \Lambda_n)\cong \bbZ^{n+1}$.

\begin{figure}[!ht]
\labellist
\pinlabel $r_2$ at 55 120
\pinlabel $\alpha_2$ at 35 90
\pinlabel $\alpha_0$ at 110 130
\pinlabel $\tilde{s}_0$ at 115 200 
\pinlabel $\tilde{s}_0$ at 112 62
\pinlabel $\tilde{s}^{-1}_2$ at 53 18
\pinlabel $\tilde{s}_2$ at  64 50
\pinlabel $b_2$ at 53 185
\pinlabel $\overline{\Sigma}_1$ at -10  100 
\endlabellist
\includegraphics[width=1.8in]{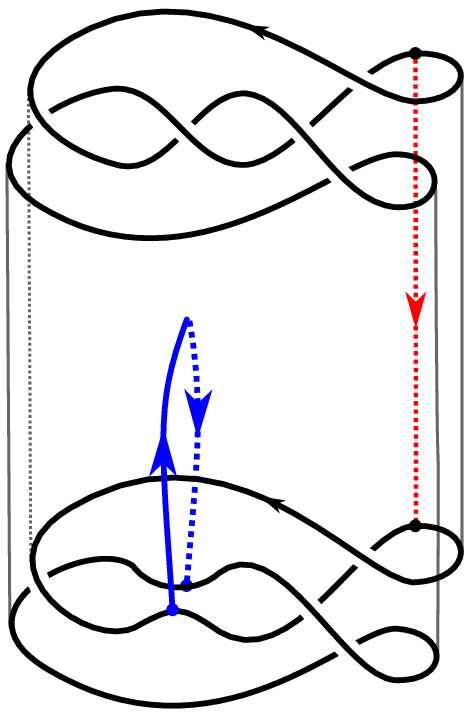}
\caption{As an example, assume $\Lambda$ is the Legendrian $(2,3)$ torus knot and the first pinch move  is taken at $b_2$.  The blue curve and the red curve are  $\alpha_2$ and $\alpha_0$ restricted on $\overline{\Sigma}_1$, respectively.}
\label{trace}
\end{figure}

For each curve $\alpha_i$, where $i=0, \dots, n$, Poincar{\'e} duality gives an element $\theta_{\alpha_i}\in H^1(\overline{\Sigma}_n)$. 
Denote its dual in $H_1(\Sigma_n)$ by $\tilde{s}_i$.
Therefore, for any union of paths $\gamma$ in $\overline{\Sigma}_n$, the monomial $w(\gamma)$ associated to $\gamma$ in $\bbZ_2[H_1(\overline{\Sigma}_n)]$ 
is 
$$w(\gamma) =\prod\limits_{i=0}^{n}\tilde{s}_i^{n_i(\gamma)},$$ 
where $n_i(\gamma)$ is intersection number of  $\alpha_i$ and  $\gamma$ counted with signs.

For $i<n$, the map $H_1(\overline{\Sigma}_i) \to H_1(\overline{\Sigma}_{n})$ induced by the inclusion map is injective.
A similar argument shows that for a union of paths $\gamma$ in $\overline{\Sigma}_i$, the monomial associated to $\gamma$ in $\bbZ_2[H_1(\overline{\Sigma}_i)]$ counts intersections of $\alpha_0, \alpha_{\sigma(1)}, \dots, \alpha_{\sigma(i)}$ with $\gamma$.
Notice that the curves $\alpha_{\sigma(i+1)}, \dots, \alpha_{\sigma(n)}$ do not intersect $\overline{\Sigma}_i$.
Hence the monomial  in $\bbZ_2[H_1(\overline{\Sigma}_i)]$ agrees with $w(\gamma)$ in $\bbZ_2[H_1(\overline{\Sigma}_n)]$.

Choose a family of capping paths for $\Lambda_i$ on $\overline{\Sigma}_i$ for $i=0, \dots, n$.
By Proposition \ref{map}, for $i=1, \dots, n+1$, each exact Lagrangian cobordism $\Sigma_i$
gives a  DGA map $\Phi_i$:
$$
\Phi_i: \big(\calA(\Lambda_{i-1}; \bbZ_2[H_1(\overline{\Sigma}_{i-1})]), \partial_{i-1}\big) \to
 \big(\calA(\Lambda_i; \bbZ_2[H_1(\overline{\Sigma}_i)]), \partial_i\big),
 $$
which maps any Reeb chord $a$ of $\Lambda_{i-1}$ to
$$ 
\begin{array}{rl}
& \displaystyle{\sum_{\dim \calM^{\Sigma_i}(a; b_1, \dots, b_m)=0}  \sum_{u \in \calM^{\Sigma_i}(a; b_1, \dots, b_m)}w(\tilde{u})\ b_1\cdots b_m}\\
&\\
=& 
\displaystyle{\sum_{\dim \calM^{\Sigma_i}(a; b_1, \dots, b_m)=0}  \sum_{u \in \calM^{\Sigma_i}(a; b_1, \dots, b_m)}\left(w(\gamma_{a})w(u)\prod_{i=1}^{m} w(\gamma_{b_i})^{-1}\right)\  b_1\cdots b_m}.
\end{array}
 $$

Now we show that the DGA map induced by the exact Lagrangian cobordisms  is independent of the choice of capping paths.

\begin{thm}
Let $\gamma$ and $\gamma'$ be two families of capping paths of $\Lambda_i$ on $\overline{\Sigma}_i$ for $i=0, \dots, n$.
Denote the corresponding DGAs by
$\big(\calA^{\gamma}(\Lambda_{i}; \bbZ_2[H_1(\overline{\Sigma}_{i})]), \partial^{\gamma}_i\big)$ and $\big(\calA^{\gamma'}(\Lambda_{i}; \bbZ_2[H_1(\overline{\Sigma}_{i})]), \partial^{\gamma'}_i\big)$.
Assume $\Phi^{\gamma}_i$  and $\Phi^{\gamma'}_i$ are the corresponding the DGA maps induced by $\Sigma_i$.
Then the maps
$$
\begin{array}{rll}
f_i: \big(\calA^{\gamma}(\Lambda_{i}; \bbZ_2[H_1(\overline{\Sigma}_{i})]), \partial^{\gamma}_i\big)& \to& \big(\calA^{\gamma'}(\Lambda_{i}; \bbZ_2[H_1(\overline{\Sigma}_{i})]), \partial^{\gamma'}_i\big)\\
&&\\
c & \mapsto & w(\gamma'_c)^{-1}w(\gamma_c)\ c\\
\end{array}
$$
are DGA isomorphisms  for $i=0, \dots, n$.
Moreover, the following diagram commutes:

\diag{\big(\calA^{\gamma}(\Lambda_{i-1}; \bbZ_2[H_1(\overline{\Sigma}_{i-1})]), \partial^{\gamma}_{i-1}\big) \arr^{f_{i-1}} \ard_{\Phi^{\gamma}_i}& 
\big(\calA^{\gamma'}(\Lambda_{i-1}; \bbZ_2[H_1(\overline{\Sigma}_{i-1})]), \partial^{\gamma'}_{i-1}\big)\ard^{\Phi^{\gamma'}_i}\\
\big(\calA^{\gamma}(\Lambda_{i}; \bbZ_2[H_1(\overline{\Sigma}_{i})]), \partial^{\gamma}_i\big) \arr^{f_{i}} & 
\big(\calA^{\gamma'}(\Lambda_{i}; \bbZ_2[H_1(\overline{\Sigma}_{i})]), \partial^{\gamma'}_i\big).}

\end{thm}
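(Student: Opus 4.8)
The plan is to treat the two assertions separately, deducing the first from Proposition \ref{invariant} and establishing the second by a direct term-by-term comparison on generators.

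For the claim that each $f_i$ is a DGA isomorphism, I would observe that the differential $\partial_i$ counts rigid holomorphic disks of the Lagrangian projection of $\Lambda_i$, exactly as in Section \ref{DGA}, the only change being that its coefficients now lie in $\bbZ_2[H_1(\overline{\Sigma}_i)]$ and that the capping paths of $\Lambda_i$ are chosen on $\overline{\Sigma}_i$ rather than on $\Lambda_i$ itself. Since the proof of Proposition \ref{invariant} uses only the formal way in which capping paths enter the differential --- namely that changing the family $\gamma$ to $\gamma'$ multiplies the coefficient of each disk $u \in \calM^{\Lambda_i}(a; b_1, \dots, b_m)$ by $w(\gamma'_a)^{-1} w(\gamma_a)\prod_{j} w(\gamma'_{b_j}) w(\gamma_{b_j})^{-1}$ --- the same argument applies verbatim. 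Thus $f_i$, being the ring homomorphism that fixes $\bbZ_2[H_1(\overline{\Sigma}_i)]$ and sends $c \mapsto w(\gamma'_c)^{-1} w(\gamma_c)\, c$, is a chain map with two-sided inverse $c \mapsto w(\gamma_c)^{-1} w(\gamma'_c)\, c$, hence a DGA isomorphism.

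For commutativity I would evaluate both composites on an arbitrary Reeb chord $a$ of $\Lambda_{i-1}$. The moduli spaces $\calM^{\Sigma_i}(a; b_1, \dots, b_m)$ are independent of the capping paths, so $\Phi_i^{\gamma}$ and $\Phi_i^{\gamma'}$ sum over the \emph{same} disks and differ only in their capping-path monomials. Writing $\Phi_i^{\gamma}(a) = \sum w(\gamma_a) w(u) \prod_j w(\gamma_{b_j})^{-1} b_1 \cdots b_m$ and applying $f_i$, which replaces each $b_j$ by $w(\gamma'_{b_j})^{-1} w(\gamma_{b_j})\, b_j$ and fixes coefficients, the factors $w(\gamma_{b_j})^{-1}$ telescope against the new $w(\gamma_{b_j})$ and leave $\sum w(\gamma_a) w(u) \prod_j w(\gamma'_{b_j})^{-1} b_1 \cdots b_m$. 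Going the other way, $f_{i-1}(a) = w(\gamma'_a)^{-1} w(\gamma_a)\, a$, and since $\Phi_i^{\gamma'}$ fixes coefficients and sends $a \mapsto \sum w(\gamma'_a) w(u) \prod_j w(\gamma'_{b_j})^{-1} b_1 \cdots b_m$, the prefactor combines as $w(\gamma'_a)^{-1} w(\gamma_a) w(\gamma'_a) = w(\gamma_a)$, producing the identical expression. Both computations rely essentially on the convention of this paper that coefficient-ring elements commute with the Reeb-chord generators, which is precisely what permits the telescoping.

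The main obstacle I anticipate is bookkeeping rather than geometry: the capping path $\gamma_a$ of the \emph{input} chord $a$ lives on $\overline{\Sigma}_{i-1}$, whereas the capping paths $\gamma_{b_j}$ of the \emph{output} chords live on $\overline{\Sigma}_i$, so I must ensure the two composites are compared inside the same coefficient ring $\bbZ_2[H_1(\overline{\Sigma}_i)]$. This is handled by the inclusion-induced map $\psi_i$: the monomial $w(\gamma'_a)^{-1} w(\gamma_a)$ produced by $f_{i-1}$ in $\bbZ_2[H_1(\overline{\Sigma}_{i-1})]$ maps to the corresponding monomial in $\bbZ_2[H_1(\overline{\Sigma}_i)]$ under the injection $H_1(\overline{\Sigma}_{i-1}) \to H_1(\overline{\Sigma}_i)$ discussed above, so that $w(\gamma_a)$ may be read consistently in either space. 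Once this compatibility is recorded, the two displayed expressions coincide termwise and the square commutes.
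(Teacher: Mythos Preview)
Your proposal is correct and follows essentially the same approach as the paper: invoke Proposition \ref{invariant} for the isomorphism claim, then verify commutativity by computing $f_i \circ \Phi^{\gamma}_i(a)$ and $\Phi^{\gamma'}_i \circ f_{i-1}(a)$ term by term and observing that both reduce to $\sum w(\gamma_a)\,w(u)\prod_j w(\gamma'_{b_j})^{-1}\, b_1\cdots b_m$. Your explicit remark about reading $w(\gamma_a)$ consistently via the inclusion $H_1(\overline{\Sigma}_{i-1}) \hookrightarrow H_1(\overline{\Sigma}_i)$ is a point the paper leaves implicit, so your write-up is if anything slightly more careful.
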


\begin{proof}
The maps $f_i$ are DGA isomorphisms for the same reason as Proposition \ref{invariant}.
Now we prove the second part. For any Reeb chord $a$ of $\Lambda_{i-1}$,
$$
\small\def\arraystretch{2.5}
\begin{array} {rl}
f_i \circ \Phi^{\gamma}_{i} (a) & =f_i \left( \displaystyle{\sum_{\dim \calM^{\Sigma_i}(a; b_1, \dots, b_m)=0}  \sum_{u \in \calM^{\Sigma_i}(a; b_1, \dots, b_m)}\left(w(\gamma_{a})w(u)\prod_{i=1}^{m} w(\gamma_{b_i})^{-1}\right)\  b_1\cdots b_m}\right)\\
&= \displaystyle{\sum_{\dim \calM^{\Sigma_i}(a; b_1, \dots, b_m)=0}  \sum_{u \in \calM^{\Sigma_i}(a; b_1, \dots, b_m)}
\left(w(\gamma_{a})w(u)\prod_{i=1}^{m} w(\gamma_{b_i})^{-1}w(\gamma'_{b_i})^{-1}w(\gamma_{b_i})\right) b_1\cdots b_m}\\
&= \displaystyle{\sum_{\dim \calM^{\Sigma_i}(a; b_1, \dots, b_m)=0}  \sum_{u \in \calM^{\Sigma_i}(a; b_1, \dots, b_m)}\left(w(\gamma_{a})w(u)\prod_{i=1}^{m} w(\gamma'_{b_i})^{-1}\right)\  b_1\cdots b_m},\\
\Phi^{\gamma'}_i \circ f_{i-1}(a) & =\Phi^{\gamma'}_i \Big(w(\gamma'_{a})^{-1}w(\gamma_{a})a\Big)\\
&=w(\gamma'_{a})^{-1}w(\gamma_{a}) \displaystyle{\sum_{\dim \calM^{\Sigma_i}(a; b_1, \dots, b_m)=0}  \sum_{u \in \calM^{\Sigma_i}(a; b_1, \dots, b_m)} \left(w(\gamma'_{a})w(u)\prod_{i=1}^{m} w(\gamma'_{b_i})^{-1}\right) b_1\cdots b_m}\\
&=   \displaystyle{\sum_{\dim \calM^{\Sigma_i}(a; b_1, \dots, b_m)=0}  \sum_{u \in \calM^{\Sigma_i}(a; b_1, \dots, b_m)}\left(w(\gamma_{a})w(u)\prod_{i=1}^{m} w(\gamma'_{b_i})^{-1}\right)\  b_1\cdots b_m}.\\
\end{array}
$$

\end{proof}

Note that, if we cut $\overline{\Sigma}_i$ along the curves $\alpha_0, \alpha_{\sigma(1)}, \dots, \alpha_{\sigma(i)}$, the resulting surface is connected.
Therefore, we can choose a family $\gamma$ of capping paths  for $\Lambda_i$  on $\overline{\Sigma}_i$  such that none of them intersect the curves $\alpha_0, \alpha_{\sigma(1)}, \dots, \alpha_{\sigma(i)}$.
Choose  families of capping paths for $\Lambda_0, \dots, \Lambda_n$ in a similar way.
As a result, for any rigid holomorphic disk $u$ used in differentials of DGAs and DGA maps,
we only need to count the intersections of curves in $\alpha$ with the disk boundary, i.e. $w(\tilde{u})=w(u)$.

With this choice of capping paths, we can write down the DGA $\big(\calA(\Lambda_{i}; \bbZ_2[H_1(\overline{\Sigma}_{i})]), \partial_i\big)$ combinatorially, for $i=1,\dots, n$.
There are $2i+1$ points on $\Lambda_i$ given by the intersection of $\alpha_0$ and $\Lambda_i$, labeled by $\tilde{s}_0$, along with
the two intersections of $\alpha_{\sigma(j)}$ and $\Lambda_i$, labeled by $\tilde{s}_{\sigma(j)}$ (positive intersection) and $\tilde{s}'_{\sigma(j)}$ (negative intersection), for $j=1,\dots,i$.
One then takes the DGA of $\Lambda_i$ with these $2i+1$ base points, which has coefficients
$\bbZ_2[\tilde{s}^{\pm 1}_0, \tilde{s}_{\sigma(1)}^{\pm 1},{\tilde{s}_{\sigma(1)}}^{' \pm 1},\dots, \tilde{s}^{\pm 1}_{\sigma(i)},{\tilde{s}_{\sigma(i)}}^{' \pm 1}]$,
and quotients by the relations 
$\tilde{s}'_{\sigma(j)}=\tilde{s}_{\sigma(j)}^{-1}$ for $j=1, \dots, i$,
to get the DGA $\big(\calA(\Lambda_{i}; \bbZ_2[H_1(\overline{\Sigma}_{i})]), \partial_i\big),$
which a DGA is over $\bbZ_2[\tilde{s}^{\pm 1}_0, \tilde{s}_{\sigma(1)}^{\pm 1},\dots, \tilde{s}^{\pm 1}_{\sigma(i)}]$ and $\{\tilde{s}_0, \tilde{s}_{\sigma(1)}, \dots, \tilde{s}_{\sigma(i)}\}$ is a basis of $H_1(\overline{\Sigma}_i)$ that correspond to the curves $\alpha_0, \alpha_{\sigma(1)}, \dots, \alpha_{\sigma(i)}$.

Now we are ready to describe the DGA map $\Phi_i$ induced by the exact Lagrangian cobordism $\Sigma_i$, for $i=1,\dots, n$, which corresponds to a pinch move at  crossing $b_{\sigma(i)}$.
When we combine \cite[Section 6.5]{EHK}  with Proposition \ref{map}, we find that the DGA map 
$$
\Phi_i:\big(\calA(\Lambda_{i-1}; \bbZ_2[H_1(\overline{\Sigma}_{i-1})]), \partial_{i-1}\big) \to\big(\calA(\Lambda_{i}; \bbZ_2[H_1(\overline{\Sigma}_{i})]), \partial_i\big)
$$
maps the Reeb chord $b_{\sigma(i)}$ to $ \tilde{s}_{\sigma(i)}$ and any other Reeb chord $c$ to
$$ c+ \displaystyle{\sum_{\dim\calM(c,b_{\sigma(i)};c_1, \dots, c_m)=1}\sum_{u \in {\calM}(c,b_{\sigma(i)}; c_1, \dots, c_m)} w(u)  \tilde{s}^{-1}_{\sigma(i)} \ c_1 \cdots c_m},
$$
where $\calM(c,b_{\sigma(i)};c_1, \dots , c_m)$ is the moduli space of holomorphic disks in $\bbR^2_{xy}$ with boundary on $\Pi_{xy}(\Lambda_{i-1})$ that covers a positive quadrant around  $c$ and $ b_{\sigma(i)}$ and a negative quadrant around $ c_1, \dots, c_m$. 
See \cite[Section 6.5]{EHK} for a detailed definition.

In our case, in order to describe $\Phi_i$ combinatorially, we introduce two notations first.
\begin{defn}
Let $\sigma$ be a permutation of $\{1, \dots, n\}$. For $i \in \{1, \dots, n\}$, we define 
$$
\begin{array}{rl}
T_{\sigma}^i:=&\{j\in\{1,\dots , n \}\mid \sigma^{-1}(j) >\sigma^{-1}(i) \textrm{ and if } 
i<k<j \textrm{ or } j<k<i, \textrm{ then }
\sigma^{-1}(k) < \sigma^{-1}(i) \}\vspace{0.1in},\\

S_{\sigma}^i:=&\{j\in\{1,\dots , n \}\mid i \in T_{\sigma}^j\}\vspace{0.05in}\\
=&
\{j\in\{1,\dots , n \}\mid \sigma^{-1}(j) <\sigma^{-1}(i) \textrm{ and if } 
i<k<j \textrm{ or } j<k<i, \textrm{ then }
\sigma^{-1}(k) < \sigma^{-1}(j) \}.
\end{array}$$
\end{defn}

If
 $j \in T_{\sigma}^{\sigma(i)}$ (an example is shown in Figure \ref{comb}), the map $\Phi_i$  sends $b_j$ to
$$\Phi_i(b_j) = b_j + \tilde{s}_{\sigma(i)}^{-1}  \prod\limits_{\substack{j<k<\sigma(i) \textrm{ or }\\ \sigma(i)<k<j }} \tilde{s}_k^{-2}.$$
For $a_1$, $a_2$ and the rest of $b_j$'s, the map $\Phi_i$ is identity.

\begin{figure}[Iht]
\labellist
\pinlabel $b_j$ at 22 5
\pinlabel $b_{\sigma(i)}$ at 262 5
\pinlabel $\tilde{s}^{-1}_k$ at 140 5
\pinlabel $\tilde{s}_k$ at 140 62
\pinlabel $+$ at  35 36
\pinlabel $+$ at 247 36
\endlabellist
\includegraphics[width=2.5in]{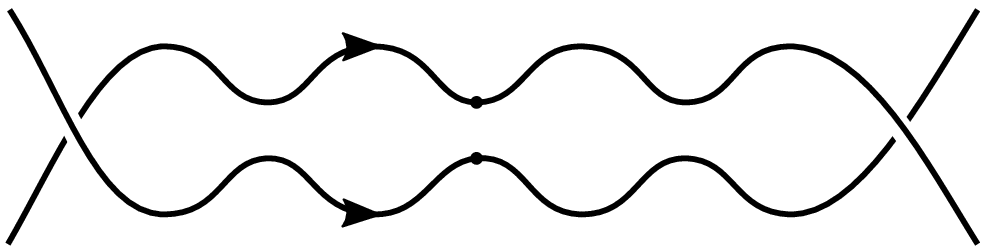}
\caption{A part of the Lagrangian projection of $\Lambda_{i-1}$.}
\label{comb}
\end{figure}

Composing all the maps $\Phi_i$ for  $i=1, \dots, n$ together, we get a DGA map
$$\overline{\Phi}_n : \big(\calA(\Lambda; \bbZ_2[H_1(\Lambda)]), \partial\big) \to\big(\calA(\Lambda_{n}; \bbZ_2[H_1(\overline{\Sigma}_{n})]), \partial_n\big).
$$
that is identity on the Reeb chords $a_1$, $a_2$  and sends the Reeb chord $b_i$, for $i=1, \dots, n$ to
$$
\displaystyle{\overline{\Phi}_n(b_i) = \Phi_1 \circ \cdots \circ \Phi_{\sigma^{-1}(i)}(b_i)= \tilde{s}_i + \sum\limits_{ j \in S_{\sigma}^i}
\Bigg(
\tilde{s}_j^{-1}\prod\limits_{\substack{j<k<i \textrm{ or }\\ i<k<j }} \tilde{s}^{-2}_k \Bigg)}.
$$

Now we describe the last DGA map $$\Phi_{n+1}: \big(\calA(\Lambda_{n}; \bbZ_2[H_1(\overline{\Sigma}_{n})]), \partial_n\big) \to \big(\bbZ_2[H_1(L_{\sigma})], 0\big).$$
As shown in Figure \ref{sn}, the underlying algebra  of $\Lambda_n$ is generated by $a_1$ and $a_2$ and the differential is given by
$$
\begin{array}{rll}
\partial_n(a_1)  &=& \tilde{s}_1 \tilde{s}_2  \  \cdots \ \tilde{s}_n  + \tilde{s}^{-1}_0,\vspace{0.05in}\\
\partial_n(a_2) &=& \tilde{s}_n \tilde{s}_{n-1}\  \cdots \ \tilde{s}_1 + 1.\\
\end{array}$$

\begin{figure}[!ht]
\labellist
\small
\pinlabel $\tilde{s}_1$ at 18 103
\pinlabel $\tilde{s}^{-1}_1$ at 20 52
\pinlabel $\tilde{s}_2$ at 58 103
\pinlabel $\tilde{s}^{-1}_2$ at 57 52
\pinlabel $\tilde{s}_3$ at 95 103
\pinlabel $\tilde{s}^{-1}_3$ at 95 52
\pinlabel $\tilde{s}_{n}$ at 230 103
\pinlabel $\tilde{s}^{-1}_{n}$ at 233 52
\pinlabel $\tilde{s}_0$ at 285 133
\pinlabel $a_1$ at 260 122
\pinlabel $a_2$ at 261 37
\endlabellist
\includegraphics[width=3in]{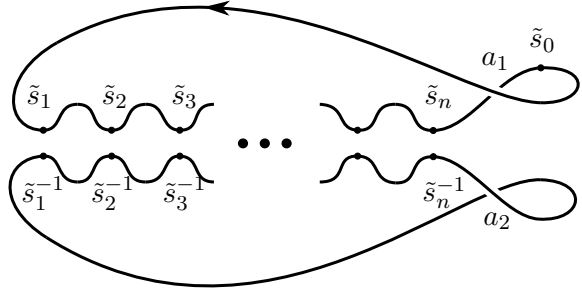}
\caption{The Lagrangian projection of $\Lambda_n$.}
\label{sn}
\end{figure}

Consider the map $\psi: H_1(\overline{\Sigma}_n) \to H_1(L_\sigma)$ induced by the inclusion map $\overline{\Sigma}_n \hookrightarrow L_\sigma$.
Since  the DGA map  
$$\Phi_{n+1}: \big(\calA(\Lambda_{n}; \bbZ_2[H_1(\overline{\Sigma}_{n})]), \partial_n\big) \to \big(\bbZ_2[H_1(L_{\sigma})], 0\big)$$
satisfies that $ \Phi_{n+1} \circ \partial_n = 0 \circ \Phi_{n+1} =0$,
we have $\psi(\tilde{s}_0)=1$ and $\psi(\tilde{s}_1)\psi(\tilde{s}_2) \cdots \psi(\tilde{s}_n)=1$.
Given that the map $\psi$ is surjective,
we assume a basis of $H_1(L_\sigma)$ is $\{s_1, \dots, s_{n-1}\}$, where $s_i = \tilde{s}_i$, for $i=1, \dots, n-1$.
The DGA map $\Phi_{n+1}$ is given by 
$$
\begin{array}{rll}
 a_1 & \mapsto & 0, \vspace{0.05in}\\

a_2& \mapsto & 0, \vspace{0.05in}\\

\tilde{s}_0& \mapsto &1,\vspace{0.05in}\\

\tilde{s}_i & \mapsto & s_i, \hspace{0.1in} i=1, \dots, n-1,\vspace{0.05in}\\

 \tilde{s}_n & \mapsto & (s_1s_2  \cdots s_{n-1})^{-1}.
 \end{array}
 $$
 Composing $\Phi_{n+1}$ with $\overline{\Phi}_n$, we get the augmentation $\epsilon_{\sigma}$ induced by $L_{\sigma}$ as follows.
\begin{thm}\label{augcompute}
Given a permutation $\sigma$ of  $\{1, \dots, n\}$, let $L_{\sigma}$ be  the exact Lagrangian filling of
the Legendrian  $(2,n)$ torus knot $\Lambda$  constructed from the \cite{EHK} algorithm.
If we write 
$$
\begin{array}{rll}
\bbZ_2[H_1(\Lambda)]&=& \bbZ_2[\tilde{s}_0, \tilde{s}_0^{-1}],\vspace{0.05in}\\
\bbZ_2[H_1(L_\sigma)]&=& \bbZ_2[s_1^{\pm 1}, \dots, s_{n-1}^{\pm 1}],
\end{array}
$$
and  set $s_n=(s_1s_2\cdots s_{n-1})^{-1}$,
then 
 the augmentation 
 $$\epsilon_{\sigma} : \calA(\Lambda; \bbZ_2[H_1(\Lambda)]) \to \bbZ_2[H_1(L_{\sigma})] 
$$
induced by $L_{\sigma}$  is given by
$$
\begin{array}{rl}
\epsilon_{\sigma}(a_j)&=0, \hspace{0.2in}j=1,2;\vspace{0.05in}\\
\epsilon_{\sigma}(b_i) &= s_i + \displaystyle{\sum\limits_{ j \in S_{\sigma}^i}
\Bigg(
s_j^{-1}\prod\limits_{\substack{j<k<i \textrm{ or }\\ i<k<j }} s^{-2}_k \Bigg)},
 \hspace{0.2in} i=1,\dots, n; \vspace{0.05in}\\
\epsilon_{\sigma}(\tilde{s}_0)&=1.\\
\end{array}
$$
\end{thm}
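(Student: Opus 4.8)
The plan is to realize the augmentation $\epsilon_\sigma$ as the composite $\Phi_{n+1}\circ\overline{\Phi}_n$ of the two DGA maps computed above, and then to read off the stated formula by one uniform substitution of coefficients. Since both $\overline{\Phi}_n$ and $\Phi_{n+1}$ have already been described explicitly in this section, the remaining work for Theorem \ref{augcompute} itself is almost entirely bookkeeping; the substance lies in the combinatorial formula for $\overline{\Phi}_n$.

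First I would recall that $\overline{\Phi}_n$ is the identity on $a_1$ and $a_2$ and that, for each $i$, $$\overline{\Phi}_n(b_i)=\tilde{s}_i+\sum_{j\in S_\sigma^i}\left(\tilde{s}_j^{-1}\prod_{\substack{j<k<i\text{ or}\\i<k<j}}\tilde{s}_k^{-2}\right).$$ This is obtained by tracking the image of $b_i$ through the successive pinch maps $\Phi_1,\dots,\Phi_n$: a pinch at $b_j$ alters $b_i$ precisely when $i\in T_\sigma^j$, in which case it adds the single coefficient $\tilde{s}_j^{-1}\prod\tilde{s}_k^{-2}$, and once $b_i$ is itself resolved to $\tilde{s}_i$ at step $\sigma^{-1}(i)$ the resulting element lies entirely in the coefficient ring, so every subsequent $\Phi_m$ fixes it. Re-indexing the accumulated corrections by $j\in S_\sigma^i=\{j:i\in T_\sigma^j\}$ yields the displayed sum.

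Next I would apply $\Phi_{n+1}$. With the convention $s_n:=(s_1s_2\cdots s_{n-1})^{-1}$ one has $\Phi_{n+1}(\tilde{s}_k)=s_k$ uniformly for all $k=1,\dots,n$, together with $\Phi_{n+1}(\tilde{s}_0)=1$ and $\Phi_{n+1}(a_1)=\Phi_{n+1}(a_2)=0$. Because $\overline{\Phi}_n(b_i)$ is already a sum of Laurent monomials in the $\tilde{s}_k$, applying the algebra map $\Phi_{n+1}$ amounts to the term-by-term substitution $\tilde{s}_k\mapsto s_k$, which converts the formula above into the claimed expression for $\epsilon_\sigma(b_i)$ verbatim, while the images of $a_1,a_2$ and $\tilde{s}_0$ give the remaining two lines. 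Note that every index $k$ appearing in an interior product satisfies $k<\max(i,j)\le n$, so the special rule for $\tilde{s}_n$ is needed only for the leading $\tilde{s}_i$ and the factors $\tilde{s}_j^{-1}$, and the uniform substitution $\tilde{s}_k\mapsto s_k$ handles these as well.

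The genuine content, and the step I expect to be the main obstacle, is not this final composition but the combinatorial formula for $\overline{\Phi}_n(b_i)$. Here one must verify, via \cite[Section 6.5]{EHK} together with Proposition \ref{map}, that the correction contributed by the pinch at $b_j$ really is $\tilde{s}_j^{-1}\prod\tilde{s}_k^{-2}$ — the exponent $-2$ on each $\tilde{s}_k$ reflecting that the boundary of the relevant holomorphic disk meets the generator curve $\alpha_k$ twice — and that the disks counted are in bijection with $\{j\in S_\sigma^i\}$, i.e. that a disk from $b_i$ to a resolved crossing $b_j$ exists exactly when every crossing strictly between them has already been pinched. Once this formula is in hand, the theorem follows by the mechanical composition and substitution described above.
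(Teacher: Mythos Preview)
Your proposal is correct and follows essentially the same approach as the paper. In the paper, Theorem~\ref{augcompute} is not given a separate proof; the derivation is the discussion in Section~\ref{aug} immediately preceding the statement, which computes $\overline{\Phi}_n(b_i)$ by composing the pinch maps (via \cite[Section 6.5]{EHK} and Proposition~\ref{map}) and then applies $\Phi_{n+1}$ by the substitution $\tilde{s}_0\mapsto 1$, $\tilde{s}_i\mapsto s_i$, $\tilde{s}_n\mapsto (s_1\cdots s_{n-1})^{-1}$ --- exactly your two steps.
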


\begin{eg}
As an example, we compute the augmentation $\epsilon_{(2,3,1)}$ of the Legendrian $(2,3)$ torus knot induced by the exact Lagrangian filling $L_{(2,3,1)}$.

\begin{figure}[!ht]
\labellist
\small
{
\pinlabel  $b_1$ at 30 517
\pinlabel  $b_1$ at 30 382
\pinlabel  $b_1$ at 30 245
\pinlabel  $b_2$ at 90 517
\pinlabel  $b_3$ at 145 517
\pinlabel  $b_3$ at 145 382
\pinlabel $a_1$ at  195 545
\pinlabel $a_2$ at  188 493
\pinlabel $a_1$ at  195 410
\pinlabel $a_2$ at  188 358
\pinlabel $a_1$ at  195 274
\pinlabel $a_2$ at  190 223
\pinlabel $\tilde{s}_2$ at 85 380
\pinlabel $\tilde{s}^{-1}_2$ at 88 425
\pinlabel $\tilde{s}_2$ at 85 243
\pinlabel $\tilde{s}^{-1}_2$ at 88 288
\pinlabel $\tilde{s}_2$ at 85 107
\pinlabel $\tilde{s}^{-1}_2$ at 88 153
\pinlabel $\tilde{s}_3$ at 140 243
\pinlabel $\tilde{s}^{-1}_3$ at 142 288
\pinlabel $\tilde{s}_1$ at 30 107
\pinlabel $\tilde{s}^{-1}_1$ at 35 153
\pinlabel $\tilde{s}_3$ at 140 107
\pinlabel $\tilde{s}^{-1}_3$ at 142 153
\pinlabel $a_1$ at  195 137
\pinlabel $a_2$ at  190 88
\pinlabel $\tilde{s}_0$ at  208 583
\pinlabel $\tilde{s}_0$ at  208 448
\pinlabel $\tilde{s}_0$ at  208 313
\pinlabel $\tilde{s}_0$ at  208 178
}
\pinlabel $b_1$ at 400 550
\pinlabel $b_2$ at 550 550
\pinlabel $b_3$ at 700 550
\pinlabel $b_1+\tilde{s}_2^{-1}$ at 400 420
\pinlabel $\tilde{s}_2$ at 550 420
\pinlabel $b_3+\tilde{s}_2^{-1}$ at 700 420
\pinlabel $b_1+\tilde{s}_2^{-1}+\tilde{s}_2^{-2}\tilde{s}_3^{-1}$ at 400 285
\pinlabel $\tilde{s}_2$ at 550 285
\pinlabel $\tilde{s}_3+\tilde{s}_2^{-1}$ at 700 285
\pinlabel $\tilde{s}_1+\tilde{s}_2^{-1}+\tilde{s}_2^{-2}\tilde{s}_3^{-1}$ at 400 150
\pinlabel $\tilde{s}_2$ at 550 150
\pinlabel $\tilde{s}_3+\tilde{s}_2^{-1}$ at 700 150
\pinlabel $s_1+s_2^{-1}+s_1s_2^{-1}$ at 400 20
\pinlabel $s_2$ at 550 20
\pinlabel $s_1^{-1}s_2^{-1}+s_2^{-1}$ at 700 20
\pinlabel $\Phi_1$ at 280 480
\pinlabel $\Phi_2$ at 280 350
\pinlabel $\Phi_3$ at 280 215
\pinlabel $\Phi_4$ at 280 85

{\large
\pinlabel $\downarrow$ at 400 480
\pinlabel $\downarrow$ at 550 480
\pinlabel $\downarrow$ at 700 480
\pinlabel $\downarrow$ at 400 350
\pinlabel $\downarrow$ at 550 350
\pinlabel $\downarrow$ at 700 350
\pinlabel $\downarrow$ at 400 215
\pinlabel $\downarrow$ at 550 215
\pinlabel $\downarrow$ at 700 215
\pinlabel $\downarrow$ at 400 85
\pinlabel $\downarrow$ at 550 85
\pinlabel $\downarrow$ at 700 85
}
\endlabellist
\hspace{-3in}
\includegraphics[width=1.5in]{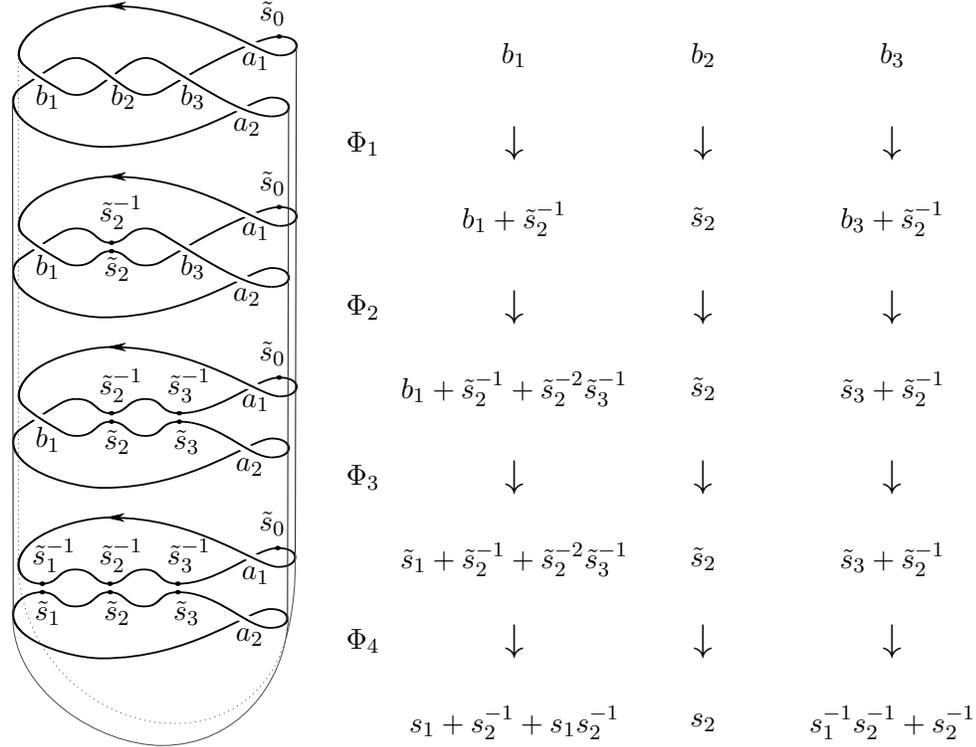}

\vspace{0.1in}

\caption{A computation of the augmentation induced by an exact Lagrangian filling of the Legendrian $(2,3)$ torus knot.
We keep track of the image of $b_1, b_2, b_3$ under the composition of $\Phi_1, \Phi_2, \Phi_3$ and $\Phi_4$.
The last line is the image of $b_1, b_2, b_3$ under the augmentation $\epsilon_{(2,3,1)}$.
}
\end{figure}

Similarly, one can compute the augmentation for each permutation of $\{1,2,3\}$ and get the following result.
$$
\arraycolsep=6pt
\def\arraystretch{1.5}
\begin{array}{ccccc}
\epsilon & &\epsilon(b_1) &\epsilon(b_2)& \epsilon(b_3)\\
\epsilon_{(1,2,3)} & & s_1& s_2+s_1^{-1}& s_1^{-1}s_2^{-1} + s_2^{-1}\\
\epsilon_{(1,3,2)} =\epsilon_{(3,1,2)} && s_1& s_2 +s_1^{-1}+ s_1s_2&s_1^{-1}s_2^{-1}\\
\epsilon_{(2,1,3)} && s_1 + s_2^{-1}&s_2  & s_1^{-1}s_2^{-1}+s_2^{-1}+ s_1^{-1}s_2^{-2}\\
\epsilon_{(2,3,1)}&& s_1 + s_2^{-1}+ s_1s_2^{-1}& s_2  & s_1^{-1}s_2^{-1}+s_2^{-1}\\
\epsilon_{(3,2,1)} &&s_1 + s_2^{-1}&s_2+ s_1s_2& s_1^{-1}s_2^{-1}\\
\end{array}
$$
\end{eg}

\subsection{Proof of the main theorem}\label{equi}
In this section, we use Theorem \ref{augcompute} to find an invariant of augmentations induced from the exact Lagrangian fillings obtained from the \cite{EHK} algorithm.
As a result, we distinguish all the augmentations in Theorem \ref{augcompute} and thus prove Theorem \ref{main}.

\begin{lem}\label{commute}
Let $L_1$ and $L_2$ be two exact Lagrangian fillings of the Legendrian $(2,n)$ torus knot $\Lambda$ constructed from the \cite{EHK} algorithm.
If  $L_1$ and $L_2$ are exact Lagrangian isotopic, then
there exists an invertible map $g: H_1(L_1)\to H_1(L_2)$ such that the following diagram commutes:
\begin{equation}\label{diag}
\xymatrix{(\calA(\Lambda),\partial) \arr^{Id} \ard_{\epsilon_{L_1}}& (\calA(\Lambda),\partial) \ard_{\epsilon_{L_2}}\\
\bbZ_2[H_1(L_1)] \arr_g &\bbZ_2[H_1(L_2)],}
\end{equation}
where $\epsilon_{L_1}$ and $\epsilon_{L_2}$ are augmentations induced by $L_1$ and $L_2$ respectively.
\end{lem}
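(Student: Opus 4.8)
The plan is to deduce the statement from the homotopy invariance of the induced augmentation under exact Lagrangian isotopy, \cite[Theorem 1.3]{EHK}, combined with a grading argument that upgrades ``chain homotopic'' to ``equal'' in our setting. The point is that, once the two target coefficient rings are identified along a single isotopy-induced isomorphism, the degree reasons leave no room for a nontrivial chain homotopy.

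First I would observe that an exact Lagrangian isotopy between $L_1$ and $L_2$ is in particular a smooth isotopy of the underlying surfaces, hence a homotopy equivalence, and therefore induces a canonical group isomorphism $g\colon H_1(L_1)\to H_1(L_2)$ together with the corresponding ring isomorphism $g\colon \bbZ_2[H_1(L_1)]\to \bbZ_2[H_1(L_2)]$. This $g$ is the invertible map in the statement; it is precisely the identification along which the two coefficient rings, both produced via Proposition \ref{map}, are to be compared.

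Next I would invoke \cite[Theorem 1.3]{EHK}: since $L_1$ and $L_2$ are exact Lagrangian isotopic, their induced augmentations are homotopic after their targets are identified along $g$. Concretely, $g\circ\epsilon_{L_1}$ and $\epsilon_{L_2}$ are chain homotopic augmentations $\calA(\Lambda)\to \bbZ_2[H_1(L_2)]$; that is, there is a degree $+1$ $(g\circ\epsilon_{L_1},\epsilon_{L_2})$-derivation $K\colon \calA(\Lambda)\to \bbZ_2[H_1(L_2)]$ with $g\circ\epsilon_{L_1}+\epsilon_{L_2}=K\circ\partial$ over $\bbZ_2$.

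Finally I would run the grading argument. The target $\bbZ_2[H_1(L_2)]$ is concentrated in degree $0$, while $K$ raises degree by one, so on each generator $x$ of $\calA(\Lambda)$ one has $K(x)\in \bbZ_2[H_1(L_2)]_{|x|+1}$. Because the generators $b_1,\dots,b_n,\tilde{s}_0$ have degree $0$ and $a_1,a_2$ have degree $1$, and there is no generator of degree $-1$, the element $K(x)$ lands in a vanishing graded piece for every generator; hence $K$ vanishes on all generators, and the derivation property forces $K\equiv 0$. Therefore $g\circ\epsilon_{L_1}=\epsilon_{L_2}$, which is exactly the commutativity of \eqref{diag}. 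I expect the only genuine subtlety to lie in the middle step: verifying that \cite[Theorem 1.3]{EHK}, whose coefficients we have revised through Proposition \ref{map}, indeed yields homotopic augmentations after transport along the isotopy-induced $g$ on $H_1$, rather than along some other identification of the homologies.
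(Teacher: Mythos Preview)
Your proposal is correct and follows essentially the same route as the paper's proof: the isotopy induces an isomorphism $g$ on $H_1$, Theorem~1.3 of \cite{EHK} yields a chain homotopy between $g\circ\epsilon_{L_1}$ and $\epsilon_{L_2}$, and a grading argument kills the homotopy. The only cosmetic difference is that the paper phrases the last step as ``$C_{-1}=0$ forces $H\circ\partial|_{C_0}=0$'' rather than ``$K$ vanishes on every generator since the target is concentrated in degree $0$,'' but these are the same observation; your flagged subtlety about the compatibility of \cite[Theorem 1.3]{EHK} with the revised coefficients and the isotopy-induced $g$ is also implicitly assumed in the paper.
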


\begin{proof}
The isotopy between
$L_1$ and $L_2$  induces an invertible map $g: H_1(L_1)\to H_1(L_2).$ 
If we identify both  $H_1(L_1)$ and $H_1(L_2)$ with $\bbZ^{n-1}$, then $g\in GL(n-1,\bbZ)$.
Thus, we have two augmentations of $\calA(\Lambda)$ to $\bbZ_2[H_1(L_2)]$: $\epsilon_1=g\circ \epsilon_{L_1}$ and $\epsilon_2=\epsilon_{L_2}$.
Since the two fillings $L_1$ and $L_2$ are isotopic through a family of exact Lagrangian fillings, according to \cite[Theorem 1.3]{EHK}, we know that $\epsilon_1$ and $\epsilon_2$ are chain homotopic.
In other words, there exists a degree $1$ map
$H: \calA(\Lambda) \to \bbZ_2[H_1(L_2)]$ such that $H\circ \partial = \epsilon_1 -\epsilon_2$
as one can see from following diagram, where $C_i$ denotes degree $i$ part of $\calA(\Lambda)$. 
\diag{&C_{-1} \ar[rd]^{H} \arl&  C_0 \ar[rd]^{H}  \arl_{\partial} \ar@<1.5pt>[d]_{
 \epsilon_1}\ar@<-2pt>[d]^{
\hspace{0.03in}
\epsilon_2} & C_1 \arl_{\partial} & \arl\\
&0 \arl & \bbZ_2[H_1(L_2)] \arl & 0 \arl & \arl}
Note that $\Lambda$ has a Lagrangian projection (as shown in Figure \ref{2nast}) such that no Reeb chords are in negative degree.
Hence $C_{-1}=0$ and  $\epsilon_1-\epsilon_2 = H\circ \partial =0$.
Therefore $\epsilon_1=\epsilon_2$, i.e. the diagram \eqref{diag} commutes.
\end{proof}
\begin{rmk}
For any DGA $\calA$ that vanishes on degree $-1$ part, by the same argument, we have that two augmentations $\epsilon_1$ and $\epsilon_2$ of $\calA$ are chain homotopic if and only if they are identically the same.
For a more general criteria of two augmentations to be chain homotopic, one can check \cite[Proposition 5.16]{NRSSZ}.
\end{rmk}

Therefore, in order to distinguish exact Lagrangian fillings, we only need to distinguish their induced augmentations up to a $GL(n-1, \bbZ)$ action.
Observing the formula of the augmentation $\epsilon_{\sigma}$ in Theorem \ref{augcompute},   
we get a  combinatorial way to
 define the number of terms in $\epsilon_{\sigma}(b_i)$ for $i=1,\dots, n$ as follows.
\begin{defn}
For each permutation $\sigma$ of $\{1, \dots, n\}$ and any number $i \in \{1, \dots, n\}$, we define 
$C_{\sigma}:=\left(C_{\sigma}^1, C_{\sigma}^2, \dots , C_{\sigma}^n \right)$,
where $C_{\sigma}^i =| S_{\sigma}^i|+1$.
\end{defn}

\begin{eg}
We compute the vector $C_{\sigma}$ for all the permutations $\sigma$ of $\{1,2,3\}$ as follows.
$$
\arraycolsep=12pt
\def\arraystretch{1.8}
\begin{array}{|c|c|c|c|c|c|}
\sigma & (1,2,3) & (1,3,2)\sim(3,1,2) & (2,1,3)& (2,3,1)& (3,2,1)\\
\hline
C_{\sigma} & (1,2,2) & (1, 3,1)&( 2,1,3) & (3,1,2) & (2,2,1)\\
\end{array}
$$
\end{eg}

\begin{prop}\label{inva}
If two exact Lagrangian fillings $L_{\sigma_1}$ and $L_{\sigma_2}$ are exact Lagrangian isotopic, 
then $C_{\sigma_1}=C_{\sigma_2}$.
In other words, the vector $C_{\sigma}$ is an invariant of the exact Lagrangian filling $L_{\sigma}$ up to exact Lagrangian isotopy.
\end{prop}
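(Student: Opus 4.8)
The plan is to identify the number of terms of $\epsilon_\sigma(b_i)$ as a quantity that is preserved under the change of coordinates forced by an exact Lagrangian isotopy. First I would apply Lemma \ref{commute}: if $L_{\sigma_1}$ and $L_{\sigma_2}$ are exact Lagrangian isotopic, there is an invertible $g\colon H_1(L_{\sigma_1})\to H_1(L_{\sigma_2})$, i.e. an element of $GL(n-1,\bbZ)$, making the square \eqref{diag} commute. Since the top horizontal arrow is the identity, this says $g\big(\epsilon_{\sigma_1}(b_i)\big)=\epsilon_{\sigma_2}(b_i)$ for every Reeb chord $b_i$, $i=1,\dots,n$.

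The key structural observation is that the ring automorphism $\bbZ_2[H_1(L_{\sigma_1})]\to\bbZ_2[H_1(L_{\sigma_2})]$ induced by $g$ sends Laurent monomials to Laurent monomials, bijectively. Indeed it is just the group-ring functor applied to the group isomorphism $g$ of the free abelian groups $H_1(L)\cong\bbZ^{n-1}$, so it permutes the group elements, which are exactly the monomials $\prod_l s_l^{a_l}$. Hence $g$ carries any element with $N$ distinct monomial summands to one with exactly $N$ distinct monomial summands; in other words it preserves the number of terms of every element of $\bbZ_2[H_1(L)]$. Applying this to $g\big(\epsilon_{\sigma_1}(b_i)\big)=\epsilon_{\sigma_2}(b_i)$ reduces the proposition to the claim that the number of surviving monomials in $\epsilon_\sigma(b_i)$ is exactly $C_\sigma^i=|S_\sigma^i|+1$, independently of $\sigma$, for all $i$.

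The remaining point, which I expect to be the main obstacle, is to verify that the $|S_\sigma^i|+1$ monomials exhibited in Theorem \ref{augcompute}---namely $s_i$, together with $s_j^{-1}\prod_{j<k<i\text{ or }i<k<j}s_k^{-2}$ for $j\in S_\sigma^i$---are pairwise distinct, so that no cancellation takes place over $\bbZ_2$ and the count is genuinely $|S_\sigma^i|+1$. I would compare exponent vectors in $\bbZ^{n-1}$. Setting the index-$n$ subtlety aside, in the summand indexed by $j$ the generator $s_j$ carries the odd exponent $-1$, while in every other listed summand $s_j$ carries an even exponent ($0$ or $-2$), and $s_i$ occurs with a positive exponent only in the summand $s_i$; a short case analysis on whether $j$ lies to the left or the right of $i$ then separates all the monomials. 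The genuine subtlety is the relation $s_n=(s_1 s_2\cdots s_{n-1})^{-1}$, which must be used to re-expand any occurrence of $s_n$ in the basis $s_1,\dots,s_{n-1}$. Here one notes that, for each fixed $i$, at most one of the listed monomials involves the index $n$ at all---either the summand $s_i$ when $i=n$, or the summand indexed by $j=n$ when $n\in S_\sigma^i$---so the re-expansion modifies a single monomial. In the first case that monomial has exponent vector $(-1,\dots,-1)$, and in the second it acquires positive exponents, whereas every summand indexed by $j\neq n$ has only nonpositive exponents; a direct comparison in each case shows it cannot coincide with any of the remaining, manifestly distinct monomials. Once distinctness is established, the preservation of the number of terms yields $C_{\sigma_1}^i=C_{\sigma_2}^i$ for every $i$, that is $C_{\sigma_1}=C_{\sigma_2}$.
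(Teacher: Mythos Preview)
Your proposal is correct and follows essentially the same approach as the paper: invoke Lemma~\ref{commute}, observe that the induced group-ring isomorphism sends monomials bijectively to monomials and hence preserves the number of terms, and then verify that the $|S_\sigma^i|+1$ monomials in Theorem~\ref{augcompute} are pairwise distinct so that $C_\sigma^i$ genuinely equals the number of terms of $\epsilon_\sigma(b_i)$. The only difference is cosmetic: the paper organizes the distinctness check by sorting the monomials into three explicit shapes and comparing exponents, whereas you use a parity/sign-of-exponent argument together with the observation that at most one monomial involves the index $n$; both arguments cover the same cases and your handling of the $s_n=(s_1\cdots s_{n-1})^{-1}$ substitution is correct for $n$ odd (the setting in which the proposition is applied).
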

\begin{proof}
As noted from the formula in  Theorem \ref{augcompute} that $C_{\sigma}^i$ is the number of terms in $\epsilon_{\sigma}(b_i)$.
To show that, we first need prove that $\epsilon_{\sigma}(b_i)$ as a sum of monomials can not be shorter, 
i.e, no terms in $\epsilon_{\sigma}(b_i)$ can be canceled by another term.
This can be done by observing each monomial as a polynomial of $s_1,\dots, s_{n-1}$.
If $i \neq n$, then each term of $\epsilon_{\sigma}(b_i)$ is one of the following forms:
\begin{enumerate}
\item $s_i$;
\item $\displaystyle{s^{-1}_k \prod_{j\in S}s^{-2}_j }$ for some $k \neq i \in \{1, \dots, n-1\}$ and a subset  $S \subset \{1, \dots, n-1\}$ that does not contain $i,k$ (can be a empty set);
\item $\displaystyle{{\prod_{j\in T} s_j^{-1}} \prod_{k \notin T} s_k}$ for some subset $T \subset \{1, \dots, n-1\} $ that does not contain $i$ (can be a empty set).
\end{enumerate}
If $i=n$, each term of $\epsilon_{\sigma}(b_n)$ can be either ${s^{-1}_1 \cdots s^{-1}_{n-1}}$ or 
the form $(2)$ above.
Comparing  degrees of $s_1, \dots, s_{n-1}$ of each monomial, we know that no terms can be canceled.

Note that any map $g \in GL(n-1, \bbZ)$ does not change the number of terms of $\epsilon_{\sigma}(b_i)$. 
Therefore, combining with Lemma \ref{commute}, we prove the proposition.
\end{proof}

We say that two permutations $\sigma_1$ and $\sigma_2$ of $\{1, \dots, n\}$ are  {\bf isotopy equivalent} 
if they are equivalent via a sequence of relations of the form
 \begin{equation}\label{relation}
 (\dots,i,j,\dots,k,\dots) \sim (\dots,j,i,\dots,k,\dots) \hspace{0.1in}  \textrm{where } i<k<j.
 \end{equation}
By \cite{EHK}, if $\sigma_1$ and $\sigma_2$ are isotopy equivalent, the corresponding exact Lagrangian fillings $L_{\epsilon_1}$ and $L_{\epsilon_2}$ are exact Lagrangian isotopic and hence $C_{\sigma_1}=C_{\sigma_2}$.
Conversly, we have the following Lemma.

\begin{lem}\label{isotopy}
If $C_{\sigma_1} = C_{\sigma_2}$, then $\sigma_1$ and $\sigma_2$ are isotopy equivalent.
\end{lem}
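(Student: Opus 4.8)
The plan is to reduce the statement to a clean injectivity statement about binary trees and to finish with a counting argument. First I would rewrite the quantity $C_\sigma^i=|S_\sigma^i|+1$ in a more transparent form. Writing $\pi=\sigma^{-1}$ for the ``time'' at which each crossing is resolved, a direct unwinding of the definition of $S_\sigma^i$ shows that an index $j<i$ lies in $S_\sigma^i$ exactly when $\pi(j)$ is a running maximum of the sequence $\pi(i-1),\pi(i-2),\dots$ that is still smaller than $\pi(i)$, and symmetrically for $j>i$. Hence $|S_\sigma^i|=\ell_i+r_i$, where $\ell_i$ and $r_i$ count these left/right records below $\pi(i)$. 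I would then package this through the Cartesian tree $T_\sigma$ of the resolution order: the tree on vertices $\{1,\dots,n\}$ whose root is the crossing resolved last, with left and right subtrees built recursively from $\{1,\dots,m-1\}$ and $\{m+1,\dots,n\}$. Its in-order traversal is $1,\dots,n$, and the record counts become spine lengths: $\ell_i$ is the length of the right spine of the left subtree of $i$, and $r_i$ the length of the left spine of its right subtree, so that $C_\sigma^i=\ell_i+r_i+1$; in particular $C_\sigma^i=1$ precisely when $i$ is a leaf of $T_\sigma$.

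With this in hand, I would set up a counting sandwich. Every binary tree shape on $n$ nodes arises as some $T_\sigma$ (label the nodes by any linear extension of the tree order with the root largest), and by the forward direction (Proposition \ref{inva}) the vector $C_\sigma$ is constant on each of the $C_n$ isotopy-equivalence classes, so the number of distinct vectors $C_\sigma$ is at most $C_n$. If I can also show it is at least $C_n$, then the map sending each class to its common $C$-vector is a surjection onto the set of distinct vectors; since both sets have $C_n$ elements it is a bijection, hence injective, which is exactly the lemma. Because the number of tree shapes is $C_n$ and every shape is realized, this lower bound reduces to the purely combinatorial claim that the assignment $T\mapsto(C^1,\dots,C^n)$ is injective on binary tree shapes.

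To prove that injectivity I would induct on $n$ using a leaf-removal move. The key computation is that deleting a leaf $v$ from a shape $T$ (and relabeling the remaining vertices order-isomorphically to $\{1,\dots,n-1\}$) changes the vector in a completely local way: the entry at $v$ disappears, and the entries at the in-order neighbors $v-1$ and $v+1$ (those that exist) each drop by exactly $1$, while all other entries are unchanged. This follows from the spine description, since a removed leaf is the endpoint of the right spine of the left subtree of $v+1$ and of the left spine of the right subtree of $v-1$, and lies on no other spine. Conversely, given the smaller tree together with the in-order position $v$, the leaf re-attaches in a unique place: among two in-order-consecutive vertices exactly one lacks the relevant child slot, so there is only one way to reinsert a leaf at position $v$. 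For the induction, given $C(T_1)=C(T_2)$ I read off the common leaf set as the positions whose entry equals $1$, delete the smallest-index leaf $v$ from both; the leaf-removal formula forces $C(T_1')=C(T_2')$, the inductive hypothesis gives $T_1'=T_2'$, and unique reinsertion at position $v$ yields $T_1=T_2$.

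I expect the main obstacle to be the spine/record bookkeeping in the first step — verifying carefully that $|S_\sigma^i|$ equals $\ell_i+r_i$ and that the recursion genuinely decouples the ranges to the left and right of the last-resolved crossing — together with checking that the leaf-removal move affects exactly the two neighboring entries and no others, including the boundary cases $v=1$ and $v=n$ and the case where $v\pm1$ is itself a leaf. Once those local computations are pinned down, both the counting sandwich and the induction are routine.
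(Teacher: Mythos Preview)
Your proposal is correct, and the Cartesian-tree reformulation is sound: the identification of $|S_\sigma^i|$ with the sum of spine lengths $\ell_i+r_i$ goes through exactly as you describe, the leaf-removal formula is right (a leaf $v$ is the terminal node of the relevant spine for each of its in-order neighbors and lies on no other spine), and the unique-reinsertion claim follows because for two in-order-consecutive nodes exactly one of the two candidate child slots is vacant. Together with the counting sandwich this yields the lemma.

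The paper, however, takes a much more direct route. Rather than passing through trees, it argues position by position on the permutation itself: if $\sigma_1(1)=k$ then $C^k_{\sigma_1}=1$ forces $S_{\sigma_2}^k=\emptyset$, and this emptiness is precisely what guarantees that the element immediately preceding $k$ in $\sigma_2$ can be swapped past it by an isotopy move; iterating bubbles $k$ to the front of $\sigma_2$. An induction (using that $S_{\sigma_1}^k\subset S_{\sigma_2}^k$ together with $|S_{\sigma_1}^k|=|S_{\sigma_2}^k|$ forces equality at each later stage) then matches the two permutations entry by entry. This argument is shorter, entirely elementary, and does not appeal to the count $|\{\text{isotopy classes}\}|=C_n$, which your counting sandwich takes as input from the introduction. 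What your approach buys in exchange is the extra structural statement that isotopy classes, binary tree shapes, and $C$-vectors are all in canonical bijection, making transparent why the Catalan number appears and giving an explicit reconstruction of the tree from the $C$-vector.
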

\begin{proof}
If $\sigma_1(1)=k$, then $C_{\sigma_1}^k=1$. So $C_{\sigma_2}^k=1$, i.e., we have that  $S_{\sigma_2}^k=\emptyset$. 
If  $\sigma_2(1)\neq k$, assume the element in $\sigma_2$ right before $k$  is $l$, i.e. $\sigma_2\big(\sigma_2^{-1}(k)-1\big)=l$.
Note that $l\notin S_{\sigma_2}^k$,
i.e. there exists $i$ such that $l<i<k$ or $k<i<l$ and $\sigma_2^{-1}(i)>\sigma_2^{-1}(l)$.
Note that $i \neq k$ and hence 
  $\sigma_2^{-1}(i)>\sigma_2^{-1}(k)= \sigma_2^{-1}(l)+1$.
Thus we can use  the relation \eqref{relation} to switch  $l$ and $k$.
In this way we can switch $k$ to  the first position in $\sigma_2$, i.e. $\sigma_2(1)=k=\sigma_1(1) $.

By induction, assume $\sigma_2(i)=\sigma_1(i)$ for $i<l$ and $\sigma_1(l)=k$.
We have $S_{\sigma_1}^k \subset S_{\sigma_2}^k$. 
The assumption $C_{\sigma_2}^k=C_{\sigma_1}^k$ implies that $|  S_{\sigma_1}^k|=  |  S_{\sigma_2}^k|$ and hence $S_{\sigma_1}^k=S_{\sigma_2}^k.$ 
If $\sigma_2(l)\neq k$, for a similar reason as above, one can switch $k$  to the $l$-th position and get $\sigma_2(l)=\sigma_1(l)$.
Therefore, we have that $\sigma_1$ and $\sigma_2$ are isotopy equivalent.
\end{proof}
\begin{thm}\label{knot}
If $n$ is odd,
the $C_n$ exact Lagrangian fillings  of the Legendrian $(2,n)$ torus knot $\Lambda$ from the   algorithm in \cite{EHK}  are all of different exact Lagrangian isotopy classes.
\end{thm}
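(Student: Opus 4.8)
The plan is to combine the two immediately preceding results, Proposition~\ref{inva} and Lemma~\ref{isotopy}, into a single statement that cleanly separates the $C_n$ fillings; at this point the theorem is essentially a formal corollary. The key structural fact, recorded in the introduction, is that the $C_n$ fillings are indexed precisely by the isotopy equivalence classes of permutations of $\{1,\dots,n\}$ under the relation~\eqref{relation}. Thus, to prove that all $C_n$ fillings are of distinct exact Lagrangian isotopy type, it suffices to show that whenever two permutations $\sigma_1$ and $\sigma_2$ give exact Lagrangian isotopic fillings, they must already be isotopy equivalent.

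First I would assume that $L_{\sigma_1}$ and $L_{\sigma_2}$ are exact Lagrangian isotopic and feed this into Proposition~\ref{inva}, which tells me that the combinatorial vectors coincide, $C_{\sigma_1}=C_{\sigma_2}$. I would then apply Lemma~\ref{isotopy}, which says exactly that equality of these vectors forces $\sigma_1$ and $\sigma_2$ to be isotopy equivalent. Chaining the two implications gives the desired conclusion: exact Lagrangian isotopic fillings arise only from isotopy equivalent permutations, i.e.\ from the same one of the $C_n$ classes.

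Conversely, by \cite{EHK} isotopy equivalent permutations already yield exact Lagrangian isotopic fillings, so the correspondence $[\sigma]\mapsto[L_\sigma]$ from isotopy equivalence classes to exact Lagrangian isotopy classes is a well-defined injection. Since there are exactly $C_n$ such equivalence classes, this produces $C_n$ pairwise non-isotopic fillings, which is the assertion of the theorem.

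I do not expect any serious obstacle at this final step, as the genuine content has already been expended upstream: the explicit augmentation formula of Theorem~\ref{augcompute}, the observation in Proposition~\ref{inva} that each coordinate $C_\sigma^i$ literally counts the (non-cancelling) monomials of $\epsilon_\sigma(b_i)$ and hence is unchanged by the $GL(n-1,\bbZ)$ ambiguity coming from the choice of basis of $H_1(L_\sigma)$, and the inductive ``sort $k$ into position'' argument of Lemma~\ref{isotopy}. The only point I would take care to verify is the combinatorial count asserting that the relation~\eqref{relation} partitions the permutations into exactly $C_n$ classes, so that the statement ``$C_n$ distinct fillings'' is justified on the nose rather than merely as a lower bound.
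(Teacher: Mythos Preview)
Your proposal is correct and follows essentially the same approach as the paper: both combine Proposition~\ref{inva} and Lemma~\ref{isotopy} to conclude that $L_{\sigma_1}$ and $L_{\sigma_2}$ are exact Lagrangian isotopic only if $\sigma_1$ and $\sigma_2$ are isotopy equivalent. The paper phrases it via the contrapositive (not isotopy equivalent $\Rightarrow$ $C_{\sigma_1}\neq C_{\sigma_2}$ $\Rightarrow$ not exact Lagrangian isotopic), but the logical content is identical to yours.
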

\begin{proof}
If two augmentations $\sigma_1$ and $\sigma_2$ are not isotopy equivalent, by Lemma \ref{isotopy}, we have $C_{\sigma_1}\neq C_{\sigma_2}$.
According to Proposition \ref{inva}, the corresponding exact Lagrangian fillings $L_{\sigma_1}$ and $L_{\sigma_2}$ are not exact Lagrangian isotopic. 
Therefore, the Legendrian $(2,n)$ torus knot has at least $C_n$ exact Lagrangian fillings up to exact Lagrangian isotopy. 
\end{proof}
\begin{cor}\label{link}
When $n$ is even,
the  Legendrian $(2,n)$ torus link $\Lambda$ has at least $C_n$ exact Lagrangian fillings.
\end{cor}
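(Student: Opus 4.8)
The plan is to reduce the even case (a link) to the already-proven odd case (a knot), Theorem~\ref{knot}. The natural mechanism is a cobordism relating the Legendrian $(2,n)$ torus link to the Legendrian $(2,n+1)$ or $(2,n-1)$ torus knot, so that the $C_n$ fillings of the link can be promoted to distinct fillings of a nearby knot.

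\textbf{Construction of the reduction.} Let $\Lambda$ be the Legendrian $(2,n)$ torus link with $n$ even, and let $\Lambda'$ be the Legendrian $(2,n+1)$ torus knot. In the Lagrangian projection of Figure~\ref{2n}, $\Lambda'$ is obtained from $\Lambda$ by inserting one extra crossing $b_{n+1}$ to the right of $b_n$; equivalently, there is a single pinch move (saddle cobordism) $\Sigma$ from $\Lambda$ to $\Lambda'$ resolving that crossing. First I would attach this one pinch cobordism on top of each of the $C_n$ fillings $L_\sigma$ of $\Lambda$ to produce fillings of $\Lambda'$. Concretely, given a permutation $\sigma$ of $\{1,\dots,n\}$, the concatenation of $L_\sigma$ with $\Sigma$ is exactly the filling $L_{\widehat\sigma}$ of $\Lambda'$ associated to the permutation $\widehat\sigma=(\sigma(1),\dots,\sigma(n),n+1)$ of $\{1,\dots,n+1\}$ obtained by appending $n+1$ at the end. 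Thus every filling of the link $\Lambda$ canonically determines a filling of the knot $\Lambda'$, and distinct $L_\sigma$ give rise to fillings indexed by distinct permutations $\widehat\sigma$.

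\textbf{Distinguishing via the invariant.} Since $n+1$ is odd, Theorem~\ref{knot} applies to $\Lambda'$: the fillings $L_{\widehat\sigma}$ are distinguished by the combinatorial vector $C_{\widehat\sigma}$, and two such fillings are exact Lagrangian isotopic only if their permutations are isotopy equivalent in the sense of \eqref{relation}. Next I would verify that if two permutations $\sigma_1,\sigma_2$ of $\{1,\dots,n\}$ are \emph{not} isotopy equivalent, then neither are their extensions $\widehat\sigma_1,\widehat\sigma_2$: appending the largest index $n+1$ at the last position cannot create new isotopy relations among the first $n$ entries, because a relation \eqref{relation} swapping entries $i,j$ requires a value $k$ strictly between them appearing later, and the appended entry $n+1$ is maximal (so it can never play the role of such an intermediate $k$) and is fixed in the last slot (so it never participates in a swap). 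Hence the map $\sigma\mapsto\widehat\sigma$ sends inequivalent permutations to inequivalent permutations, so the $C_n$ extended fillings $L_{\widehat\sigma}$ are pairwise non-isotopic fillings of $\Lambda'$.

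\textbf{Transporting back to the link.} Finally I would use functoriality of exact Lagrangian isotopy under concatenation: if the $L_\sigma$ were isotopic fillings of $\Lambda$ for two inequivalent $\sigma_1,\sigma_2$, then concatenating the isotopy with the fixed cobordism $\Sigma$ would exhibit $L_{\widehat\sigma_1}$ and $L_{\widehat\sigma_2}$ as isotopic fillings of $\Lambda'$, contradicting the previous paragraph. Therefore the $C_n$ fillings $L_\sigma$ of the even link $\Lambda$ are themselves pairwise non-isotopic, giving at least $C_n$ exact Lagrangian fillings up to exact Lagrangian isotopy. \textbf{The main obstacle} I anticipate is making the concatenation step rigorous at the level of exact Lagrangian isotopy---that is, confirming that an exact Lagrangian isotopy of fillings below a fixed cobordism induces an exact Lagrangian isotopy of the concatenated fillings, and that the concatenation $L_\sigma\cup\Sigma$ really coincides (up to isotopy) with the algorithmic filling $L_{\widehat\sigma}$ of the knot rather than merely some filling of it.
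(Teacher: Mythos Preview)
Your strategy matches the paper's: stack the saddle $\Sigma$ resolving $b_{n+1}$ on top of each $L_\sigma$, land in the odd (knot) case, and pull back. There is, however, a bookkeeping slip that breaks your combinatorial step as written.

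Since $\Sigma$ sits \emph{above} $L_\sigma$, the pinch at $b_{n+1}$ is the \emph{first} saddle encountered in the EHK description of the concatenated filling of $\Lambda'$. The resulting permutation is therefore $\tilde\sigma=(n+1,\sigma(1),\dots,\sigma(n))$, not your $\widehat\sigma=(\sigma(1),\dots,\sigma(n),n+1)$; this is exactly what the paper records. The distinction matters: your argument that ``$n+1$ can never play the role of an intermediate $k$ and is fixed in the last slot'' is valid for $\widehat\sigma$, but with $n+1$ in the first slot it is certainly not pinned---for instance $(n+1,1,2,\dots)\sim(1,n+1,2,\dots)$ via $k=2$---so the reasoning does not transfer directly.

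The paper handles this by appealing to the invariant $C_\sigma$ rather than arguing on the relation: one checks $C^{n+1}_{\tilde\sigma}=1$, $C^i_{\tilde\sigma}=C^i_\sigma$ for $i\le n-1$, and $C^n_{\tilde\sigma}=C^n_\sigma+1$, so $C_\sigma\mapsto C_{\tilde\sigma}$ is injective, and Lemma~\ref{isotopy} then gives that inequivalent $\sigma$'s yield inequivalent $\tilde\sigma$'s. (Your direct approach can also be rescued: any elementary move in $\tilde\sigma$ either involves $n+1$ and leaves the relative order of $1,\dots,n$ unchanged, or is a move among $1,\dots,n$ that survives deleting $n+1$; hence $\tilde\sigma_1\sim\tilde\sigma_2\Rightarrow\sigma_1\sim\sigma_2$.) The ``main obstacle'' you flag---that concatenation with a fixed cobordism preserves exact Lagrangian isotopy---is routine; the genuine gap is the permutation identification above.
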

\begin{proof}
Start with the Legendrian $(2,n+1)$-knot  $\Lambda_0$ and label its degree $0$ Reeb chords from left to right by $b_1, \dots, b_{n+1}$ as usual.
Let $\Sigma$ be the exact Lagrangian cobordism from $\Lambda$ to $\Lambda_0$ that corresponds to a pinch move of $\Lambda_0$ at $b_{n+1}$.
For any permutation $\sigma$ of $\{1, \dots, n\}$, the exact Lagrangian filling $L_{\sigma}$ of $\Lambda$ gives an exact Lagrangian filling of $\Lambda_0$ by concatenating with $\Sigma$ on the top.
This new exact Lagrangian filling of $\Lambda_0$ corresponds to the permutation $\tilde{\sigma}=(n+1, \sigma(1), \dots, \sigma(n))$ of $\{1, 2, \dots, n+1\}$, i.e., it is the filling $L_{\tilde{\sigma}}$ of $\Lambda_0$.
Note that $C^{n+1}_{\tilde{\sigma}}=1$.
Moreover, we have that $C^i_{\tilde{\sigma}}= C^i_{\sigma}$ for $i=1, \dots, n-1$ and $C^n_{\tilde{\sigma}}=C^n_{\sigma}+1$.
Thus $C_{\tilde{\sigma}}$ is determined by $C_{\sigma}$.
Therefore, by Proposition \ref{inva} and  Lemma \ref{isotopy},
if two permutations $\sigma_1$ and $\sigma_2$ of $\{1, \dots, n\}$ are not isotopy equivalent,
their induced permutations $\tilde{\sigma}_1$ and $\tilde{\sigma}_2$ of $\{1, \dots , n+1\}$ are not isotopy equivalent.
According to Theorem \ref{knot}, the corresponding exact Lagrangian fillings $L_{\tilde{\sigma}_1}$ and $L_{\tilde{\sigma}_2}$ of $\Lambda_0$ are not exact Lagrangian isotopic.
Hence $L_{\sigma_1}$ and $L_{\sigma_2}$  are not exact Lagrangian isotopic.
\end{proof}

\bibliographystyle{alpha}

\begin{thebibliography}{}

\bibitem[CEL10]{CEL}
K.~Cieliebak, T.~Ekholm, and J.~Latschev.
\newblock Compactness for holomorphic curves with switching {L}agrangian
  boundary conditions.
\newblock {\em J. Symplectic Geom.}, 8(3):267--298, 2010.

\bibitem[Che02]{Che}
Yuri Chekanov.
\newblock Differential algebra of {L}egendrian links.
\newblock {\em Invent. Math.}, 150(3):441--483, 2002.

\bibitem[EENS13]{EENS}
Tobias Ekholm, John~B. Etnyre, Lenhard Ng, and Michael~G. Sullivan.
\newblock Knot contact homology.
\newblock {\em Geom. Topol.}, 17(2):975--1112, 2013.

\bibitem[EGH00]{EGH}
Y.~Eliashberg, A.~Givental, and H.~Hofer.
\newblock Introduction to symplectic field theory.
\newblock Number Special Volume, Part II, pages 560--673. 2000.
\newblock GAFA 2000 (Tel Aviv, 1999).

\bibitem[EHK]{EHK}
Tobias Ekholm, Ko~Honda, and Tam{\'a}s K{\'a}lm{\'a}n.
\newblock {L}egendrian knots and exact {L}agrangian cobordisms.
\newblock {\em J. \ Eur. \ Math. \ Soc. (JEMS)}.
\newblock To appear.

\bibitem[Eli98]{Eli}
Yakov Eliashberg.
\newblock Invariants in contact topology.
\newblock In {\em Proceedings of the {I}nternational {C}ongress of
  {M}athematicians, {V}ol. {II} ({B}erlin, 1998)}, number Extra Vol. II, pages
  327--338, 1998.

\bibitem[ENS02]{ENS}
John~B. Etnyre, Lenhard~L. Ng, and Joshua~M. Sabloff.
\newblock Invariants of {L}egendrian knots and coherent orientations.
\newblock {\em J. Symplectic Geom.}, 1(2):321--367, 2002.

\bibitem[Lin16]{Lin}
Francesco Lin.
\newblock Exact lagrangian caps of {L}egendrian knots.
\newblock {\em J. Symplectic Geom.}, 14(1):269--295, 2016.

\bibitem[Ng03]{Ngresolve}
Lenhard~L. Ng.
\newblock Computable {L}egendrian invariants.
\newblock {\em Topology}, 42(1):55--82, 2003.

\bibitem[Ng10]{NgRSFT}
Lenhard Ng.
\newblock Rational symplectic field theory for {L}egendrian knots.
\newblock {\em Invent. Math.}, 182(3):451--512, 2010.

\bibitem[NRS{\etalchar{+}}15]{NRSSZ}
Lenhard Ng, Dan Rutherford, Steven Sivek, Vivek Shende, and Eric Zaslow.
\newblock Augmentations are sheaves.
\newblock {\em arXiv preprint arXiv:1502.04939}, 2015.

\bibitem[STWZ15]{STWZ}
Vivek Shende, David Treumann, Harold Williams, and Eric Zaslow.
\newblock Cluster varieties from {L}egendrian knots.
\newblock {\em arXiv preprint arXiv:1512.08942}, 2015.

\end{thebibliography}

\newcommand{\etalchar}[1]{$^{#1}$}

\end{document}